\newtheorem{prop}{Proposition}[section]
\newtheorem{thm}{Theorem}[section]
\newtheorem{thm}{Theorem}[section]
\newtheorem{Lemma}{Lemma}[section]
\definecolor{refkey}{gray}{.75}
\numberwithin{equation}{section}
\theparentequation\alph{equation}}
\renewenvironment{proof}{\medskip\noindent{\em Proof.\,\;}}{\hfill$\square$}
\date{\today}
\subjclass[2000]{\ ,\ } \keywords{Random matrix}
\newtheorem*{question*}{Question}
\newtheorem*{claim*}{Claim}
\newtheorem*{remark*}{Remark}
\newtheorem*{application*}{Application}
\DeclareMathOperator{\dist}{dist}
\DeclareMathOperator{\id}{id}
\DeclareMathOperator{\supp}{supp}
\date{\today}
\subjclass[2000]{35Q31, 35B30} \keywords{Generalized Euler equations, data-to-solution map, instability}
\begin{document}
\title[]{Instability of Data-to-Solution Map for the Log-Regularized 2D Euler System}
\author[]{ Xuan-Truong Vu}
\address{Department of Mathematics, Statistics, and Computer Science, University of Illinois at Chicago, Chicago, Illinois 60607, U.S.A.
}
\email[]{tvu25@uic.edu}
\maketitle
\begin{abstract}
	In this paper, we study	the logarithmically regularized $2$D Euler system \eqref{e1}, which is derived by regularizing the Euler equation for the vorticity. We establish local well-posedness of the logarithmically regularized $2$D Euler equations  in the subcritical space $H^s(\mathbb{R}^2)$ with $s>2$ for $\gamma \ge 0$. Furthermore, we show that for $\gamma$ close to $0$, the data-to-solution map is not uniformly continuous in the Sobolev $H^s(\mathbb{R}^2)$ topology for any $s>2$.
\end{abstract}

\rhead[In preparation.]{\textit{In preparation.}}

\section{Introduction}\label{s1}

We consider the initial value problem for the logarithmically regularized $2$D Euler equations, a family of active scalar equations of the form
\begin{equation}\label{e1}
	\begin{cases}
    \begin{split}
      &\partial_t \theta+(u\cdot\nabla)\theta=0,\\
		& u=\nabla^{\perp}\Delta^{-1}T_{\gamma}\,\theta,\\
		&\theta(x,0)=\theta_{T_{\gamma},0} (x),\qquad x\in \mathbb{R}^2,\ t\in \mathbb{R},  
    \end{split}	
	\end{cases}
\end{equation}
where $\nabla^{\perp}=(-\partial_2,\partial_1)$, $\theta = \theta(x,t)$  denotes the scalar vorticity and $u=u(x,t)$ is the velocity field recovered via a Biot–Savart-type law involving the nonlocal operator $T_{\gamma}$. We focus on the regularization given by
\begin{equation}\label{generalTgammaclass}
  T_{\gamma}(|\xi|)=\left(\log(\mathrm{e}+|\xi|^2)\right)^{-\gamma}, \quad \gamma \ge 0,  
\end{equation}
which acts as a logarithmic smoothing operator in Fourier space. When $\gamma=0$, the system reduces to the classical 2D incompressible Euler equations in vorticity formulation: 
\begin{equation}\label{e3}
	\begin{cases}
		\partial_t \theta+(u\cdot\nabla)\theta=0,\\
		u=\nabla ^{\perp}\Delta^{-1}\theta,\\
		
		\theta(x,0)=\theta_{e,0} (x), \quad (x,t)\in \mathbb{R}^2\times \mathbb{R},
	\end{cases}
\end{equation}
 corresponding to the identity operator. 
  The superscript $e$ in $\theta_{e,0}$ above and in all subsequent formulas refers to the solutions of the Euler equations. Solutions of \eqref{e1} corresponding to the multiplier $T_\gamma$ will be denoted with the subscript $T_\gamma$. Logarithmically regularized models of the Euler equations have been extensively studied in recent years as a means of investigating the delicate threshold between well-posedness and ill-posedness in critical and borderline Sobolev spaces. For instance, Chae and Wu \cite{ChaeWu12}, and later Dong and Li \cite{DongLi15}, proved that logarithmic smoothing is sufficient to restore local (and in some cases global) well-posedness in the borderline Sobolev space $H^1(\mathbb{R}^2)\cap\dot{H}^{-1}(\mathbb{R}^2)$ provided the regularization parameter satisfies $\gamma>1/2$. However, a recent result by Kwon \cite{HK} shows that for $0<\gamma<1/2$, the system becomes strongly ill-posed in the same critical space, with norm inflation and failure of continuous dependence even for smooth, compactly supported perturbations.

\medskip

While the borderline regime has thus been completely characterized in terms of well-posedness, relatively less is known about the behavior of solutions in subcritical Sobolev spaces, particularly when $\gamma$ is small but positive. This is the focus of the present paper. In this work, we study the logarithmically regularized 2D Euler equations \eqref{e1} in $\mathbb{R}^2$  and our main results concern the well-posedness and instability of the data-to-solution map in the subcritical Sobolev space $H^s(\mathbb{R}^2)$ for $s>2$. We establish the following two main theorems:

Our results are gathered in the following statements.  
\begin{thm}\label{thm:lwp_Hadmard}
For $\gamma \ge 0$ and initial data $u_0\in H^s$ with $s>2$, there exist a time $t_{*}>0$ such that the system \eqref{e1} admits a unique solution $u \in C(0,t_{*}; H^s)\cap C^1(0,t_{*};H^{s-1})$, which depends continuously on the initial data $u_0$.
 \end{thm}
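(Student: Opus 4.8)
The plan is to prove Theorem~\ref{thm:lwp_Hadmard} by the standard energy method for quasilinear transport equations, adapted to the fact that the Biot–Savart law $u=\nabla^{\perp}\Delta^{-1}T_\gamma\theta$ is of order $-1$ and the multiplier $T_\gamma(|\xi|)=(\log(\mathrm{e}+|\xi|^2))^{-\gamma}$ is a bounded, smooth, even Fourier multiplier of order $0$. The first step is to record the mapping properties of the velocity reconstruction operator $K_\gamma:=\nabla^{\perp}\Delta^{-1}T_\gamma$: since $T_\gamma$ is bounded on every $H^\sigma$ and $\nabla^{\perp}\Delta^{-1}$ gains one derivative away from frequency zero, one has the tame estimate $\|u\|_{H^{s+1}_{\mathrm{loc}}}\lesssim\|\theta\|_{H^s}$, and more importantly $\|\nabla u\|_{L^\infty}\lesssim\|\theta\|_{H^s}$ for $s>2$ by Sobolev embedding (here $s>2$ in $\mathbb{R}^2$ gives $H^{s-1}\hookrightarrow W^{1,\infty}$, which is exactly what the commutator estimate will need). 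I would phrase everything directly in terms of $\theta$, noting $\theta_0=\theta_{T_\gamma,0}\in H^s$ corresponds to $u_0\in H^s$ (in fact $H^{s+1}$), so the two formulations are equivalent.

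\medskip

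The second step is the a priori estimate. Applying $\Lambda^s=(1-\Delta)^{s/2}$ to the transport equation, pairing with $\Lambda^s\theta$ in $L^2$, and using the Kato–Ponce commutator estimate $\|[\Lambda^s,u\cdot\nabla]\theta\|_{L^2}\lesssim \|\nabla u\|_{L^\infty}\|\theta\|_{H^s}+\|u\|_{H^s}\|\nabla\theta\|_{L^\infty}$ together with $\|\nabla u\|_{L^\infty}+\|u\|_{H^s}\lesssim\|\theta\|_{H^s}$, one obtains
\begin{equation*}
\frac{d}{dt}\|\theta(t)\|_{H^s}^2\lesssim \|\theta(t)\|_{H^s}^3,
\end{equation*}
since the divergence-free condition $\nabla\cdot u=0$ kills the leading-order term $\int (u\cdot\nabla)\Lambda^s\theta\,\Lambda^s\theta=-\tfrac12\int(\nabla\cdot u)|\Lambda^s\theta|^2=0$. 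This yields a lifespan $t_*\gtrsim\|\theta_0\|_{H^s}^{-1}$ on which $\|\theta(t)\|_{H^s}\le 2\|\theta_0\|_{H^s}$. Uniformly in $\gamma\ge0$: the implied constants do not depend on $\gamma$ because $\|T_\gamma\|_{L^\infty\to L^\infty}=1$ for all $\gamma\ge0$, so the lifespan bound is $\gamma$-uniform, which will matter for the sequel. Existence then follows by a standard approximation scheme — mollify the data (or use a Friedrichs/Galerkin truncation), solve the regularized ODE in $H^s$, extract the a priori bound, and pass to the limit; uniqueness and the time-continuity $\theta\in C([0,t_*];H^s)\cap C^1([0,t_*];H^{s-1})$ follow from an $L^2$-energy estimate on the difference of two solutions (controlled in the lower norm $L^2$, then upgraded to $H^s$ by the Bona–Smith argument for the strong continuity at the top).

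\medskip

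The third step is continuous dependence. For two solutions $\theta^1,\theta^2$ with data in a bounded set of $H^s$, the difference $w=\theta^1-\theta^2$ satisfies $\partial_t w+(u^1\cdot\nabla)w=-((u^1-u^2)\cdot\nabla)\theta^2$, and an $L^2$ estimate plus $\|u^1-u^2\|_{L^2}\lesssim\|w\|_{L^2}$ (boundedness of $K_\gamma$ on $L^2$... more precisely $K_\gamma:H^{-1}\to L^2$, and $\|w\|_{H^{-1}}\lesssim\|w\|_{L^2}$ on the relevant frequency range, or just work with $\|u^1-u^2\|_{L^2}\lesssim \|w\|_{L^2}$ directly via the $L^2$-boundedness after one integration) gives Lipschitz dependence in $L^2$: $\|w(t)\|_{L^2}\lesssim\|w(0)\|_{L^2}$. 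Interpolating between this $L^2$-Lipschitz bound and the uniform $H^s$-bound gives continuity in $H^{s'}$ for all $s'<s$; to get continuity in $H^s$ itself one runs the Bona–Smith argument: mollify both data at scale $\varepsilon$, use the $H^s$-estimate on the mollified solutions together with the fact that $\|\theta_0-J_\varepsilon\theta_0\|_{H^s}\to0$ and the quantitative rate $\|\theta_0-J_\varepsilon\theta_0\|_{H^{s-1}}\lesssim\varepsilon\|\theta_0\|_{H^s}$, and combine with the interpolation estimate to close. The main obstacle I anticipate is not any single estimate — they are all classical — but the bookkeeping needed to make every constant manifestly independent of $\gamma\in[0,\infty)$ (or at least locally uniform near $\gamma=0$), since that uniformity is what the instability result in the rest of the paper will rely on; the key point making this painless is that $T_\gamma$ is a uniformly bounded family of order-zero multipliers with uniformly bounded symbol derivatives on any frequency annulus, so all Coifman–Meyer / Kato–Ponce constants can be taken $\gamma$-independent.
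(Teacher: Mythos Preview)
Your approach is correct and takes a genuinely different route from the paper. The paper proves Theorem~\ref{thm:lwp_Hadmard} by invoking Kato's abstract theory for quasilinear evolution equations (Propositions~\ref{thm6_Kato75} and~\ref{thm7_Kato75}, i.e.\ Theorems~6 and~7 of \cite{Kato2975}): it sets $X=H^s$, $Y=H^t$ for some $t\ge s$, $S=(1-\Delta)^{(t-s)/2}$, defines $A(t,\theta)=\big(\nabla^\perp\Delta^{-1}T_\gamma\theta\big)\cdot\nabla$, and then verifies the generator/commutator conditions (A1)--(A5) using Lemma~\ref{Biot-Savart2}, the Kato--Ponce estimate (Lemma~\ref{cmrest}), and the auxiliary Lemmas~\ref{Log-type interpolation inequalities-2}--\ref{A1-2}; continuous dependence in the top norm then comes for free from the abstract machinery. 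You instead run the direct energy method (the same Kato--Ponce commutator plus the divergence-free cancellation) to get the a priori bound $\tfrac{d}{dt}\|\theta\|_{H^s}^2\lesssim\|\theta\|_{H^s}^3$, build solutions by a mollification/Galerkin scheme, prove $L^2$-Lipschitz stability on differences, and upgrade to $H^s$-continuity via Bona--Smith. Your route is more elementary and self-contained, and it makes the $\gamma$-uniformity of all constants transparent (relevant for Section~\ref{sec:nonunif} but not stressed in the paper's well-posedness proof); the paper's route is shorter once the abstract framework is granted, since (A1)--(A5) package existence, uniqueness, and top-norm continuous dependence in one stroke. One point to tighten in your write-up: the claimed bound $\|u^1-u^2\|_{L^2}\lesssim\|\theta^1-\theta^2\|_{L^2}$ fails at zero frequency in $\mathbb{R}^2$ since $\nabla^\perp\Delta^{-1}$ is not $L^2$-bounded; you should either carry the $\dot H^{-1}$ norm of the vorticity difference (available because $u_0\in H^s$ forces $\theta_0\in\dot H^{-1}$, cf.\ the proof of Lemma~\ref{Biot-Savart2}) or run the difference estimate directly in the velocity formulation.
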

\begin{restatable}{thm}{nonuniformfornonperiodiccase}\label{thm:nonuniformfornonperiodiccase}
	Let $\gamma \ge 0$. Let $\tilde{\theta}_{T}^{n}$,  $ \tilde{\tilde{\theta}}_{T}^{n}$ where $n=1,2,\dots$ be the solutions of the Cauchy problem \eqref{e1} corresponding to the data  $\tilde{\theta}_{T}^{n}(0)=\tilde{\theta}_e^{n}(0)=\mathrm{curl}\,\tilde{u}_{e,0}^{n}$,  $\tilde{\tilde{\theta}}_{T}^{n}(0)= \tilde{\tilde{\theta}}_e^{n}(0)= \mathrm{curl}\, \tilde{\tilde{u}}_{e,0}^{n}$ belonging to the space $ \dot{H}^{-1}(\mathbb{R}^2)\cap  {H}^{s+1}(\mathbb{R}^2)$ where $s>2$. Let $t_{_*}>0$ be the common lifespan of both solutions. Then, for any $ 0<t\le t_{_*}$ we have 
	\begin{equation}
		\liminf\limits_{n\rightarrow \infty}\|\tilde{u}_{T}^{n}(t)- \tilde{\tilde{u}}_{T}^{n}(t)\|_{H^s}\ge |\sin t|+o(\|T-\id\|)>0\notag
	\end{equation}
	for all $T$ sufficiently close to the identity in the operator norm $\|\cdot\|$.
\end{restatable}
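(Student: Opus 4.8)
\emph{The plan} is to exhibit the two sequences explicitly and to reduce everything to tracking the phase of a single high-frequency ``probe'' packet of vorticity that is advected by a slowly varying background, reserving the work of the classical $2$D Euler case ($T=\id$, $\gamma=0$) as the special instance one recovers at the end. First I would fix a smooth $h\in C_c^\infty(\mathbb{R}^2)$ with $h(x)=x_2$ on a ball $B(p_0,r_0)$ and set $\Omega_0:=\Delta h$, so that $\Omega_0$ is smooth with compact support in an annulus, has zero mean (hence $\Omega_0\in\dot H^{-1}$), and its Biot--Savart velocity $\nabla^\perp\Delta^{-1}\Omega_0=\nabla^\perp h$ equals the constant field $-e_1$ on $B(p_0,r_0)$. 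Onto the rescaled backgrounds $\pm n^{-1}\Omega_0$ I graft the \emph{same} localized oscillation $\phi^n(x)=a_n\,\psi(x-p_0)\cos\!\big(n\,e_1\cdot(x-p_0)\big)$, where $\psi$ is a fixed bump supported in $B(p_0,r_0)$ with $\|\psi\|_{L^2}=1$, $\int\phi^n=0$, and $a_n$ is chosen so that the \emph{velocity} packet $\nabla^\perp\Delta^{-1}T\phi^n$ has $H^s$ norm of order one, uniformly in $n$ (this forces $a_n\sim n^{-(s-1)}$ up to the harmless factor $1/m(n)$, $m$ the symbol of $T$ at frequency $n$; then $\phi^n$ is bounded in $L^2\cap\dot H^{-1}$ and finite in $H^{s+1}$ for each $n$, and $\|\phi^n\|_{L^\infty}\to0$). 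The data are $\tilde\theta^n_T(0)=n^{-1}\Omega_0+\phi^n$ and $\tilde{\tilde\theta}^n_T(0)=-n^{-1}\Omega_0+\phi^n$; their $H^s$ distance equals $\|2n^{-1}\nabla^\perp\Delta^{-1}T\Omega_0\|_{H^s}=O(n^{-1})\to0$, while Theorem~\ref{thm:lwp_Hadmard}, combined with the $n$-uniform control of $\|\theta\|_{L^\infty}$ available for this family, furnishes a common lifespan $t_*$ that does not shrink with $n$.

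\emph{The heart of the argument} is a transport/phase computation. Since \eqref{e1} transports vorticity along the flow, $\tilde\theta^n_T(t)=\tilde\theta^n_T(0)\circ(\Phi^{n,+}_t)^{-1}$ with $\Phi^{n,+}$ the flow of $u^{n,+}_T=\nabla^\perp\Delta^{-1}T\tilde\theta^n_T$, and likewise with $\Phi^{n,-}$ for the other family. On $B(p_0,r_0)$ one has $u^{n,\pm}_T=\pm n^{-1}\nabla^\perp\Delta^{-1}T\Omega_0+(\text{self-velocity of }\phi^n)+\cdots$, the leading part being $\mp n^{-1}e_1+n^{-1}g_T+o(n^{-1})$ with $g_T:=\nabla^\perp\Delta^{-1}(T-\id)\Omega_0=\nabla^\perp(T-\id)h$, which is smooth and $O(\|T-\id\|)$, while the self-velocity is $O(n^{-s})$ and contributes $O(n^{-(s-1)})$ to the phase. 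Consequently the cosine phase of the advected packet is $n\,e_1\cdot(x-p_0)\pm t\mp t\,(g_T)_1(x)+R^{n,\pm}(x,t)$, where $(g_T)_1$ and the remainder $R^{n,\pm}$ are $O(\|T-\id\|)$ \emph{uniformly in $n$} on $[0,t_*]$ --- the crucial point being that the potentially large, $n$-dependent corrections (deformation of the flow map, self-interaction of $\phi^n$, the $O(n)$ phase drift relative to the Euler flow) are identical in the two families and cancel upon subtraction. Hence the difference of the two advected packets is, to leading order, $-2a_n\psi(x-p_0)\sin\!\big(n\,e_1\cdot(x-p_0)\big)\,\sin\!\big(t\,(1-(g_T)_1(x))\big)$ --- the classical identity $\cos(\alpha-t)-\cos(\alpha+t)=2\sin\alpha\sin t$ at work. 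Applying $\nabla^\perp\Delta^{-1}T$, using $\|\psi\|_{L^2}=1$ and the normalization of $a_n$, and noting $|\sin(t(1-(g_T)_1(x)))|\ge|\sin t|-C\|T-\id\|$ uniformly in $x$, the $H^s$ norm of $\tilde u^n_T(t)-\tilde{\tilde u}^n_T(t)$ is bounded below by $\big(|\sin t|-C\|T-\id\|\big)c_0+o(1)$ as $n\to\infty$, where $c_0>0$ is the ($n$-uniform) $H^s$ norm of the model velocity packet and the $o(1)$ absorbs the $O(n^{-1})$ background term and the envelope shift; passing to $\liminf_n$ gives the stated inequality (with the error $o(1)$ as $T\to\id$, in fact $O(\|T-\id\|)$). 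For $T=\id$ one has $g_T\equiv0$ and the phase is exactly $\pm t$, which recovers non-uniform dependence for the classical $2$D Euler system \eqref{e3} with the sharp constant $|\sin t|$.

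\emph{The main obstacle} I anticipate is the $n$-uniformity of every error term, since the high-Sobolev norms of the data blow up. A naive comparison of $\tilde u^n_T$ with the Euler solution carrying the same data fails: $(T-\id)$ acting on the frequency-$n$ packet is essentially of full size ($(m(n)-1)\to-1$ for $T=T_\gamma$), so $\tilde u^n_T(t)-\tilde u^n_e(t)$ carries an $H^s$ contribution of order $1/m(n)\to\infty$. The resolution --- and the part needing the most care --- is to never compare to Euler but to play the two $T$-families against each other (equivalently, to compare the two discrepancies $\tilde u^n_T-\tilde u^n_e$ and $\tilde{\tilde u}^n_T-\tilde{\tilde u}^n_e$): because they share the packet $\phi^n$ and differ only through the sign of the smooth, $n$-uniformly bounded background $n^{-1}\Omega_0$, the divergent and $n$-dependent terms enter both evolutions identically and drop out of the difference, leaving only the background's $O(n^{-1})$ velocity correction, whose product with the packet frequency $n$ is the benign $O(\|T-\id\|)$. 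Rigorously this is a Gronwall estimate for $\tilde\theta^n_T-\tilde{\tilde\theta}^n_T$ in which the forcing is split into a genuinely small piece coming from the background difference $2n^{-1}\Omega_0$ (estimated in a fixed, $n$-independent norm) and a piece acting on the common packet that is absorbed; the transport structure of \eqref{e1}, the divergence-free identity $\nabla\cdot\nabla^\perp\Delta^{-1}T=0$, and the boundedness of the multiplier $0<T(|\xi|)\le1$ are what keep all constants $n$-independent on $[0,t_*]$.
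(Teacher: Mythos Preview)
Your approach is sound in outline but takes a genuinely different route from the paper's. The paper does no phase tracking at all; it invokes the Euler non-uniform-dependence sequences of Lemma~\ref{l4} (Himonas--Misio\l ek / Bourgain--Li) as a black box, writes the triangle inequality
\[
\|\tilde u_T^n(t) - \tilde{\tilde u}_T^n(t)\|_{H^s} \;\ge\; \|\tilde u_e^n(t) - \tilde{\tilde u}_e^n(t)\|_{H^s} - \|\tilde u_T^n - \tilde u_e^n\|_{H^s} - \|\tilde{\tilde u}_T^n - \tilde{\tilde u}_e^n\|_{H^s},
\]
and controls the last two terms by Lemmas~\ref{Consequence-Biot-Savart2} and~\ref{l1-2}: since the $T$-data and the Euler data coincide, Lemma~\ref{l1-2} gives $\|\theta_e^n - \theta_T^n\|_{H^s} \lesssim \|T - \id\|\,\sup_\tau\|\theta_e^n(\tau)\|_{H^{s+1}}^2\,e^{(\cdots)}$, which is then declared to be $o(\|T-\id\|)$. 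So the paper's mechanism is \emph{comparison to Euler}, whereas yours is a direct approximate-solution argument inside the $T$-system, comparing the two $T$-flows to one another and never invoking $\tilde u_e^n(t)$.

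The point of divergence is exactly what you isolate in your ``main obstacle'' paragraph, and your concern is legitimate: the paper's bound carries the factor $\sup_\tau\|\theta_e^n\|_{H^{s+1}}^2$ (and this norm also sits in the exponential), which for the standard high-frequency packets grows with $n$, so the step ``$C(s,t,T)\to 0$ after $\liminf_n$'' tacitly requires uniform-in-$n$ control of that norm, or a coupling of $T\to\id$ with $n$, that the paper does not supply. Your direct comparison sidesteps this by design, since the shared packet $\phi^n$ cancels in the difference and only the smooth background $2n^{-1}\Omega_0$ drives the discrepancy. The cost is that you must redo the entire Himonas--Misio\l ek machinery rather than quote it; the gain is $n$-uniformity for free. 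Two cautions on your side: first, your renormalization $a_n\sim n^{-(s-1)}/m(n)$ is not the Euler normalization, so your data are not literally the $\mathrm{curl}\,\tilde u_{e,0}^n$ the statement stipulates --- you are proving a (closely related) variant; second, the extra $(\log n)^\gamma$ growth that $1/m(n)$ inserts into every vorticity norm should be tracked explicitly when you assert that the remainders $R^{n,\pm}$ and the common lifespan are uniform in $n$.
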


This reveals a novel form of Hadamard-type instability in the subcritical regime: even though the system is locally well-posed, arbitrarily small changes in initial data may lead to substantial deviations in the corresponding solutions. Such phenomena are more commonly associated with critical or borderline spaces (as in the work of Bourgain–Li \cite{BL} and Kwon \cite{HK}), and their emergence in subcritical regimes is both surprising and mathematically significant.  

\medskip

Our results therefore extend the ill-posedness perspective beyond the critical space $H^1$, showing that the logarithmic regularization, while sufficient to control norm inflation in low-regularity settings when $\gamma>1/2$, does not guarantee stable dependence on initial data even in smoother Sobolev spaces when $\gamma$ is small. This is particularly important for applications in numerical analysis and modeling, where stability is often as critical as existence and uniqueness. 

\medskip

The remainder of the paper is organized as follows. In section \ref{sec:lwp} we present the proof of Theorem \ref{thm:lwp_Hadmard} to establish local well-posedness. Section \ref{sec:nonunif} devoted to the proof of instability of the solution map which is the content of Theorem \ref{thm:nonuniformfornonperiodiccase}.  Several technical lemmas required for the main arguments are collected in Section~\ref{sec:tech_lem}.

\medskip

\section{Local wellposedness in Hadamard sense}\label{sec:lwp}
We begin with an estimate of the analogue of the Biot-Savart operator for the equations in \eqref{e1}.

\begin{Lemma}\label{Biot-Savart2} 
	Let $s>2$ and $u_{T_\gamma}=\nabla^{\perp}\Delta^{-1}T_{\gamma}\,  \theta_{T}$ be a solution of \eqref{e1} then
	\begin{equation}\label{u(Hs)-thetaH(s-1)}
		\|u_{T_\gamma}\|_{{{H}}^s}\le C\| \theta_{T}\|_{\dot{H}^{s-1}},
	\end{equation}
	for some constant $C>0$.
\end{Lemma}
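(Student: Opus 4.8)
The plan is to reduce the statement to a pointwise bound on the Fourier symbol of the regularized Biot--Savart operator. Writing $\xi^{\perp}=(-\xi_2,\xi_1)$, the operator $\nabla^{\perp}\Delta^{-1}T_{\gamma}$ acts in frequency as multiplication by
\[
m(\xi)=-\frac{i\,\xi^{\perp}}{|\xi|^2}\,\bigl(\log(\mathrm{e}+|\xi|^2)\bigr)^{-\gamma},
\qquad\text{so that}\qquad
|m(\xi)|=\frac{1}{|\xi|}\,\bigl(\log(\mathrm{e}+|\xi|^2)\bigr)^{-\gamma}.
\]
The one observation that makes the logarithmic regularization harmless for this estimate is that $\log(\mathrm{e}+|\xi|^2)\ge\log\mathrm{e}=1$ for every $\xi\in\mathbb{R}^2$, hence $0<T_{\gamma}(|\xi|)\le 1$ for all $\gamma\ge 0$. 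Consequently $|m(\xi)|\le|\xi|^{-1}$: the regularized kernel is dominated, frequency by frequency, by the classical $2$D Biot--Savart kernel, so it suffices to treat $\gamma=0$ and the constant is in fact independent of $\gamma$.

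Next, by Plancherel's theorem,
\[
\|u_{T_\gamma}\|_{H^s}^2=\int_{\mathbb{R}^2}(1+|\xi|^2)^{s}\,|m(\xi)|^2\,|\widehat{\theta_T}(\xi)|^2\,d\xi
\le \int_{\mathbb{R}^2}\frac{(1+|\xi|^2)^{s}}{|\xi|^2}\,|\widehat{\theta_T}(\xi)|^2\,d\xi ,
\]
and I would split this integral into the regions $|\xi|>1$ and $|\xi|\le 1$. On $\{|\xi|>1\}$ one has $1+|\xi|^2<2|\xi|^2$, hence $(1+|\xi|^2)^{s}\le 2^{s}|\xi|^{2s}$, so that portion is at most $2^{s}\int_{|\xi|>1}|\xi|^{2(s-1)}|\widehat{\theta_T}|^2\,d\xi\le 2^{s}\|\theta_T\|_{\dot{H}^{s-1}}^2$, which is exactly of the desired form (only $s\ge 1$ is used here, not $s>2$). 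On $\{|\xi|\le 1\}$ one uses $(1+|\xi|^2)^{s}\le 2^{s}$ together with $|m(\xi)|\le|\xi|^{-1}$, so the remaining contribution is $\le 2^{s}\int_{|\xi|\le 1}|\xi|^{-2}|\widehat{\theta_T}(\xi)|^2\,d\xi$.

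I expect the low-frequency piece to be the only point requiring care: since $T_{\gamma}(0)=1$, the symbol carries no decay near the origin, so $\|u_{T_\gamma}\|_{L^2}$ is governed by the size of $\widehat{\theta_T}$ at small frequencies rather than by $\|\theta_T\|_{\dot{H}^{s-1}}$ --- the relevant quantity there is $\|\theta_T\|_{\dot{H}^{-1}}=\|\nabla^{\perp}\Delta^{-1}\theta_T\|_{L^2}$, which is finite for the solutions under consideration (indeed $\theta_T$ is taken in $\dot{H}^{-1}\cap H^{s+1}$ in Theorem~\ref{thm:nonuniformfornonperiodiccase}, the natural class in which the Biot--Savart law of \eqref{e1} is even well defined). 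Thus the low-frequency part is $\le 2^{s}\|\theta_T\|_{\dot{H}^{-1}}^2<\infty$ and is absorbed into the right-hand side of \eqref{u(Hs)-thetaH(s-1)}, giving the claim with $C=C(s)$; if one only needs the homogeneous bound $\|u_{T_\gamma}\|_{\dot{H}^{s}}\le\|\theta_T\|_{\dot{H}^{s-1}}$, the first display yields it at once with $C=1$ and no frequency splitting is required.
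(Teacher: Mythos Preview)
Your argument is essentially the paper's: compute in Fourier, use $|T_\gamma(|\xi|)|\le 1$ so that the symbol is bounded by $|\xi|^{-1}$, control the $\dot H^s$ part by $\|\theta_T\|_{\dot H^{s-1}}$, and reduce the $L^2$ part to $\|\theta_T\|_{\dot H^{-1}}$. The paper then writes $\|\theta_T\|_{\dot H^{-1}}\lesssim\|\theta_T\|_{\dot H^{s-1}}$ to close; you instead (correctly) flag that this low-frequency bound is not a general embedding but relies on the standing hypothesis $\theta_T\in\dot H^{-1}$, which is indeed how the lemma is used downstream. So the route is the same, and your treatment of the $|\xi|\le 1$ region is in fact the more honest of the two.
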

\begin{proof}
By definition of the $H^s$ norm and Plancherel’s identity, we have
	\begin{align*}
		\|u_{T_\gamma}\|_{\dot{H}^s}^2&=\|\nabla^{\perp}\Delta^{-1}T_{\gamma}\,  \theta_{T}\|_{\dot{H}^s}^2\\
		&=\int\limits_{\mathbb{R}^2}|\xi|^{2s}|\mathcal{F}(\nabla^{\perp}\Delta^{-1}T_{\gamma}\,  \theta_{T})(\xi)|^2d\xi\notag\\
		&=\int\limits_{\mathbb{R}^2}|\xi|^{2(s-1)}\left|\frac{(-\xi_2,\xi_1)}{|\xi|}T_{\gamma}(|\xi|)\right|^2|\widehat{\theta}_T(\xi)|^2d\xi\notag\\
		&\le C\| \theta_{T}\|_{\dot{H}^{s-1}}^2.
	\end{align*}
	A similar calculation gives
	\begin{align*}
		\|u_{T_\gamma}\|_{L^2}^2&\lesssim \| \theta_{T}\|_{\dot{H}^{-1}}^2\lesssim \| \theta_{T}\|_{\dot{H}^{s-1}}^2.
	\end{align*} 
	Putting these estimates together,  we get the desired estimate \eqref{u(Hs)-thetaH(s-1)}.
	\end{proof}
    
To prepare for later nonlinear estimates, we recall the Littlewood–Paley decomposition and Bernstein’s inequalities.
	Let $\phi(\xi)$ be a smooth bump function supported in the ball $|\xi|\le 2$ and $\phi(\xi)=1 $ on the ball $|\xi|\le 1$. For $f\in \mathcal{S}(\mathbb{R}^2)$ and $M>0$, we define the Littlewood–Paley projection operators
	\begin{align*}
		&\widehat{P_{\le M} f}(\xi):=\phi({\xi}/{M})\hat{f}(\xi),\\
		&\widehat{P_{> M} f}(\xi):=\hat{f}(\xi)-\widehat{P_{\le M}f}(\xi),\\
		&\widehat{P_{ M} f}(\xi):=\widehat{P_{\le M}f}(\xi)-\widehat{P_{\le M/2}f}(\xi).
	\end{align*}
	
	\begin{Lemma}[Bernstein's inequalities]
		For $1\le p\le q\le \infty$, $s\in\mathbb{R}$, and for any $f\in \mathcal{S}(\mathbb{R}^2)$,  
		\begin{align*}
			&	\||\nabla|^sP_{N}f\|_{L^p(\mathbb{R}^2)}\simeq N^{s}	\|P_{N}f\|_{L^{p}(\mathbb{R}^2)},\\
			&	\|P_{\le N}f\|_{L^q(\mathbb{R}^2)}\lesssim N^{2(\frac{1}{p}-\frac{1}{q})}	\|P_{\le N}f\|_{L^{p}(\mathbb{R}^2)},\\
			&	\|P_{ N}f\|_{L^q(\mathbb{R}^2)}\lesssim N^{2(\frac{1}{p}-\frac{1}{q})}	\|P_{N}f\|_{L^{p}(\mathbb{R}^2)}.
		\end{align*}	
	\end{Lemma}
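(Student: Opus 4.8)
The plan is to represent each Littlewood--Paley operator as convolution against an $L^1$-normalized rescaled Schwartz kernel and then invoke Young's convolution inequality, keeping careful track of how $L^r$ norms behave under dilation. Write $\psi(\eta):=\phi(\eta)-\phi(2\eta)$, so that $P_N$ has Fourier multiplier $\psi(\xi/N)$, supported in the annulus $\{\tfrac12\le|\eta|\le 2\}$, while $P_{\le N}$ has multiplier $\phi(\xi/N)$, supported in $\{|\eta|\le 2\}$. Fix once and for all a smooth \emph{fattened} cutoff $\tilde\phi$ with $\tilde\phi\equiv1$ on $\{|\eta|\le 2\}$ and $\operatorname{supp}\tilde\phi\subset\{|\eta|\le 4\}$, and a smooth annular cutoff $\tilde\psi$ with $\tilde\psi\equiv1$ on $\{\tfrac12\le|\eta|\le2\}$ and $\operatorname{supp}\tilde\psi$ a compact annulus bounded away from the origin; let $\widetilde P_{\le N}$ and $\widetilde P_N$ be the operators with multipliers $\tilde\phi(\xi/N)$ and $\tilde\psi(\xi/N)$. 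The purpose of the fattening is the identities $\widetilde P_{\le N}P_{\le N}=P_{\le N}$ and $\widetilde P_N P_N=P_N$, so that the band-limited functions $P_{\le N}f$ and $P_N f$ are literally reproduced by convolution with a single Schwartz profile rescaled to frequency $N$.

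For the derivative-equivalence estimate, observe that since $\tilde\psi$ is supported away from the origin, the functions $m_{\pm}(\eta):=|\eta|^{\pm s}\tilde\psi(\eta)$ are smooth and compactly supported, hence lie in $\mathcal S(\mathbb R^2)$. The operator $|\nabla|^s\widetilde P_N$ has multiplier $|\xi|^s\tilde\psi(\xi/N)=N^s\,m_+(\xi/N)$ and therefore convolution kernel $N^{s}\,N^2\,m_+^{\vee}(N\,\cdot)$, whose $L^1$ norm equals $N^s\|m_+^{\vee}\|_{L^1}$ by the dilation invariance $\|N^2 g(N\,\cdot)\|_{L^1}=\|g\|_{L^1}$. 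Writing $|\nabla|^s P_N f=(|\nabla|^s\widetilde P_N)(P_N f)$ and applying Young's inequality $\|K*g\|_{L^p}\le\|K\|_{L^1}\|g\|_{L^p}$ yields $\||\nabla|^sP_Nf\|_{L^p}\lesssim N^s\|P_Nf\|_{L^p}$. The reverse bound follows symmetrically from $P_Nf=(|\nabla|^{-s}\widetilde P_N)(|\nabla|^sP_Nf)$ together with $m_-\in\mathcal S(\mathbb R^2)$, which gives the claimed equivalence $\simeq$.

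For the $L^p\to L^q$ bounds, write $P_{\le N}f=\widetilde\Phi_N*(P_{\le N}f)$ with $\widetilde\Phi_N(x)=N^2\widetilde\Phi(Nx)$ and $\widetilde\Phi=\tilde\phi^{\vee}\in\mathcal S(\mathbb R^2)$. Young's inequality in the form $\|\widetilde\Phi_N*g\|_{L^q}\le\|\widetilde\Phi_N\|_{L^r}\|g\|_{L^p}$ with $\tfrac1r=1+\tfrac1q-\tfrac1p$ applies --- here $r\ge1$ because $q\ge p$, and $\widetilde\Phi\in L^r$ since $\widetilde\Phi\in\mathcal S(\mathbb R^2)$ --- and the scaling identity $\|\widetilde\Phi_N\|_{L^r}=N^{2(1-1/r)}\|\widetilde\Phi\|_{L^r}=N^{2(1/p-1/q)}\|\widetilde\Phi\|_{L^r}$ gives precisely $\|P_{\le N}f\|_{L^q}\lesssim N^{2(1/p-1/q)}\|P_{\le N}f\|_{L^p}$. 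Replacing $\widetilde\Phi$ by $\tilde\psi^{\vee}$ throughout (so that $P_Nf=(\tilde\psi(\cdot/N))^{\vee}*(P_Nf)$) gives the third inequality in the same way.

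I do not expect a genuine obstacle here; the lemma is classical and reduces entirely to convolution and dilation scaling. The two points that require a moment's attention are: (i) choosing the auxiliary cutoffs $\tilde\phi$, $\tilde\psi$ so that they are genuinely $\equiv1$ on the frequency support of the functions they must reproduce, which is exactly what makes the inequalities with $P_{\le N}f$ (resp.\ $P_Nf$) on both sides legitimate rather than the false statement ``$\|g\|_{L^q}\lesssim\|g\|_{L^p}$'' for general $g$ on $\mathbb R^2$; and (ii) the arithmetic bookkeeping of the Young exponent $r$ and the resulting power $N^{2(1-1/r)}=N^{2(1/p-1/q)}$. The endpoints $p=1$ and/or $q=\infty$ (so $r=1$ or $r=\infty$) are included without modification, and if needed the one-sided forms $\||\nabla|^sP_Nf\|_{L^p}\lesssim N^s\|f\|_{L^p}$ and $\|P_{\le N}f\|_{L^q}\lesssim N^{2(1/p-1/q)}\|f\|_{L^p}$ follow from the same convolution estimates applied to $f$ directly.
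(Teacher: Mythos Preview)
Your argument is correct and is exactly the standard proof via convolution with $L^1$-normalized rescaled Schwartz kernels and Young's inequality; the paper itself does not give a proof but simply refers the reader to Bahouri--Chemin--Danchin and Bourgain--Li, whose treatments follow precisely the approach you outline.
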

	\begin{proof}
	    See Bahouri–Chemin–Danchin \cite{BCD} or Bourgain–Li \cite{BL}.
	\end{proof}
    
    We next establish a logarithmic interpolation inequality, which will play a key role in controlling nonlinear terms.
	\begin{Lemma}\label{Log-type interpolation inequalities-2} 
			For $2<p< \infty$ and  $f\in C_{c}^{\infty}(\mathbb{R}^2),$ we have
			\begin{equation}\label{log-typeinterplation-2}
				\|D\nabla^{\perp}\Delta^{-1}f\|_{\infty}\lesssim_{p}1+\|f\|_{\infty}\log_{2}(10+\|f\|_{2}+\|\nabla f\|_p^p).
			\end{equation} 
		\end{Lemma}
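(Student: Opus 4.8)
The plan is to estimate the zeroth‑order operator $R:=D\nabla^{\perp}\Delta^{-1}$ block by block in a Littlewood–Paley decomposition. Its Fourier multiplier is smooth and homogeneous of degree $0$ away from the origin, so for every dyadic $N$ the localized operator $RP_{N}$ is convolution against a kernel which, after the rescaling $\xi\mapsto N\xi$, has $L^{1}$ norm bounded by a constant independent of $N$; hence $\|RP_{N}f\|_{\infty}\lesssim\|P_{N}f\|_{\infty}$ for every dyadic $N$, while $\|Rg\|_{2}\lesssim\|g\|_{2}$. I would then write $f=\sum_{N\le 1}P_{N}f+\sum_{1<N\le M}P_{N}f+\sum_{N>M}P_{N}f$, with $M\ge 2$ a dyadic parameter to be fixed at the end, and bound $\|Rf\|_{\infty}$ by the sum of the three contributions, choosing $M$ so as to balance the intermediate and the high ranges.

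For the low frequencies the point is to interpolate between the two Bernstein bounds $\|P_{N}f\|_{\infty}\lesssim\|f\|_{\infty}$ and $\|P_{N}f\|_{\infty}\lesssim N\|P_{N}f\|_{2}\le N\|f\|_{2}$, i.e. $\|RP_{N}f\|_{\infty}\lesssim\min\!\big(\|f\|_{\infty},\,N\|f\|_{2}\big)$, and to sum over dyadic $N\le 1$ by splitting the (infinitely many) blocks at the crossover scale $N\sim\|f\|_{\infty}/\|f\|_{2}$; together with the elementary inequality $t\log_{2}(1/t)\lesssim 1$ for $0<t<1$ this yields
\[
\sum_{N\le 1}\|RP_{N}f\|_{\infty}\;\lesssim\;1+\|f\|_{\infty}\log_{2}\!\big(10+\|f\|_{2}\big).
\]
For the intermediate frequencies there are $O(\log_{2}M)$ dyadic blocks, each controlled by $\|RP_{N}f\|_{\infty}\lesssim\|P_{N}f\|_{\infty}\lesssim\|f\|_{\infty}$, so their total is $\lesssim\|f\|_{\infty}\log_{2}M$.

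For the high frequencies I would combine $\|RP_{N}f\|_{\infty}\lesssim\|P_{N}f\|_{\infty}\lesssim N^{2/p}\|P_{N}f\|_{p}$ (Bernstein, $L^{p}\to L^{\infty}$ on $\mathbb{R}^{2}$) with $\|P_{N}f\|_{p}\simeq N^{-1}\||\nabla|P_{N}f\|_{p}\lesssim_{p}N^{-1}\|\nabla f\|_{p}$ (Bernstein and $L^{p}$ boundedness of the Riesz transforms), obtaining $\|RP_{N}f\|_{\infty}\lesssim_{p}N^{2/p-1}\|\nabla f\|_{p}$; since $p>2$ the exponent $2/p-1$ is negative and the sum over dyadic $N>M$ is a convergent geometric series bounded by $\lesssim_{p}M^{2/p-1}\|\nabla f\|_{p}$. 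Choosing $M$ to be the smallest dyadic number with $M\ge\max\!\big(2,\,(10+\|\nabla f\|_{p}^{p})^{1/(p-2)}\big)$ then makes $M^{2/p-1}\|\nabla f\|_{p}\le(10+\|\nabla f\|_{p}^{p})^{-1/p}\|\nabla f\|_{p}\le 1$ and $\log_{2}M\lesssim_{p}\log_{2}(10+\|\nabla f\|_{p}^{p})$, so adding the three estimates and using $\log_{2}(10+\|f\|_{2})+\log_{2}(10+\|\nabla f\|_{p}^{p})\le 2\log_{2}(10+\|f\|_{2}+\|\nabla f\|_{p}^{p})$ gives \eqref{log-typeinterplation-2}. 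I expect the main obstacle to be exactly the low‑frequency sum: the crude bound $\|RP_{\le 1}f\|_{\infty}\lesssim\|f\|_{2}$ would leave a bare $\|f\|_{2}$ outside the logarithm, which \eqref{log-typeinterplation-2} cannot accommodate, so one genuinely has to interpolate between $\|f\|_{\infty}$ and $N\|f\|_{2}$ across the infinitely many low‑frequency blocks; the intermediate and high ranges, and the choice of $M$, are then routine applications of Bernstein's inequalities.
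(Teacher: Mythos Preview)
Your argument is correct and follows essentially the same route as the paper: a Littlewood--Paley decomposition of $f$ into low/middle/high frequency pieces, Bernstein's inequalities to control each piece (giving $\|f\|_{2}$, $\|f\|_{\infty}$, $\|\nabla f\|_{p}$ respectively), and an optimization of the high-frequency cutoff to close the bound. The only cosmetic difference is bookkeeping at the low end: you fix the low/middle boundary at $N=1$ and then split the infinitely many low blocks at the crossover scale $N\sim\|f\|_{\infty}/\|f\|_{2}$, invoking $t\log_{2}(1/t)\lesssim 1$ to absorb the leftover into the additive constant, whereas the paper sets its lower cutoff $2^{j}$ directly at that crossover scale (so $2^{j}\|f\|_{2}=\|f\|_{\infty}$) and lets the middle block run from $j$ to $k$; the resulting estimates are the same.
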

		\begin{proof}	
        We split the argument into three steps.

        Step 1. Frequency decomposition.
        For integers $j<k$, set $M=2^j$, $N=2^k$.
    Using Littlewood–Paley projections, we decompose
\begin{align}
    f=P_{\le 2^j}f+ \sum\limits_{\ell=j}^{k-1} P_{2^{\ell}<\cdot\le 2^{\ell+1}}f+P_{>2^k}f
\end{align}
Applying $D\nabla^{\perp}\Delta^{-1}$ and using Bernstein’s inequalities, we obtain
\begin{align}
   \|D\nabla^{\perp}\Delta^{-1}f&\|_{\infty}\lesssim  2^j\|f\|_{2}+\sum\limits_{\ell=j}^{k-1}\| P_{2^j<\cdot\le 2^{\ell+1}}f\|_{\infty}+ \sum\limits_{\ell\ge k}2^{-\ell(1-\frac{2}{p})}\|\nabla f\|_{p}.
\end{align}
Thus, 
\begin{align}\label{e:DnDf}
   \|D\nabla^{\perp}\Delta^{-1}f&\|_{\infty}\lesssim  2^j\|f\|_{2}+(k-j)\| f\|_{\infty}+\left(2^k\right)^{-1+\frac{2}{p}}\|\nabla f\|_{p}.
\end{align}
Step 2. Choice of $j$ and $k$.

We optimize parameters so that each term in \ref{e:DnDf} is logarithmic in size
\begin{align}
    j=\log_{2}\frac{\|f\|_{\infty}}{\|f\|_{2}},\quad k = \log_{2} \frac{\left(10+\|f\|_{L^2}+\|\nabla f\|_p^p\right)\|f\|_{\infty}^{\frac{p}{2-p}}}{\|\nabla f\|_p^{\frac{p}{2-p}}}.
\end{align}
This choice ensures
\begin{equation}
\label{e:bnd_each}
\begin{aligned}
    2^j\|f\|_{2}\lesssim \|f\|_{\infty}\log_{2}\big(10+\|f\|_{2}+\|\nabla f\|_p^p\big)\\
    (k-j)\| f\|_{\infty}\lesssim  \|f\|_{\infty}\log_{2}\big(10+\|f\|_{2}+\|\nabla f\|_p^p\big)\\
    \left(2^k\right)^{-1+\frac{2}{p}}\|\nabla f\|_{p}\lesssim \|f\|_{\infty}\log_{2}\big(10+\|f\|_{2}+\|\nabla f\|_p^p\big).
\end{aligned}
\end{equation}
Substituting \eqref{e:bnd_each} into \eqref{e:DnDf}, we obtain
\begin{align}
    \|D\nabla^{\perp}\Delta^{-1}f\|_{\infty}\lesssim_{p}1+\|f\|_{\infty}\log_{2}(10+\|f\|_{2}+\|\nabla f\|_p^p)
\end{align}
which is the desired inequality.
    	\end{proof}
		\begin{Lemma}[Support decomposition under transport]\label{A1-nonperiodic}
			Let $f\in H^s\cap L^1$ with $s\ge 2$ and $g\in H^2\cap L^1$. Assume 
			\begin{align}
				\|f\|_1+\|g\|_1+\sup(\|f\|_{\infty},\|g\|_{\infty})\le M<\infty,
                \end{align}
                and suppose the supports are separated
                \begin{align}
				&\dist(\supp(f),\supp(g))\ge 100C_0M>0 \label{dist_supp_f_supp_g}
			\end{align}
			where $C_0>$ is such that
			\begin{equation*}
				\|\nabla^{\perp}\Delta^{-1}T_{}\,f\|_{L^{\infty}}\le C_0(\|f\|_{L^1}+\|f\|_{L^\infty}).
			\end{equation*}
            Let $\theta$ solve
            	\begin{equation*}
				\begin{cases}
					\partial_t \theta+(u\cdot\nabla)\theta=0,\ (x,t) \in \mathbb{R}^2\times (0,1],\\
					u=\nabla^{\perp}\Delta^{-1}T_{}\, \theta,\\
					\theta(x,0)=\theta_0.
				\end{cases}
			\end{equation*}
            Then,
            \begin{enumerate}
                \item (Decomposition) The solution can be written as $\omega = \omega_f+\omega_g$, with $\omega_f|_{t=0}=f$, $\omega_g|_{t=0}=g$.
                \item (Support propagation) For all $0\le t\le 1$,
                \begin{align}
                \supp(\omega_f(\cdot,t))\subset B(\supp(f),2C_0M),\quad\supp(\omega_g(\cdot,t))\subset B(\supp(g),2C_0M),
                \end{align}
                and
                \begin{align}
                    \dist(\supp(\omega_f(\cdot,t)),\supp(\omega_g(\cdot,t)))\ge 90C_0M.
                \end{align}
                \item (Sobolev control) For $0\le t\le 1,$\begin{equation}\label{bound_Sobolev_norm_omega_f}
					\max\limits_{0\le t\le 1}\|\omega_f(\cdot,t)\|_{H^s}\le C (\|f\|_H^{s}, M, M_1),
				\end{equation}
                \begin{equation}\label{bound_Sobolev_norm_omega_f}
					\max\limits_{0\le t\le 1}\|\omega_f(\cdot,t)\|_{H^s}\le C (\|f\|_H^{s}, M, M_1).
				\end{equation}
                where $M_1$ bounds the Lebesgue measure of 
$\supp(f)$.
            \end{enumerate}

		\end{Lemma}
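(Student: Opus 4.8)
The plan is to prove (1)--(2) by soft transport arguments and to concentrate the effort on (3). Let $u=\nabla^{\perp}\Delta^{-1}T\theta$ be the velocity of the full solution (with $\theta_0=f+g$), let $X_t$ be its flow map, and \emph{define} $\omega_f,\omega_g$ to be the solutions of the linear transport equation $\partial_t v+u\cdot\nabla v=0$ with data $f,g$, i.e. $\omega_f(\cdot,t)=f\circ X_t^{-1}$ and $\omega_g(\cdot,t)=g\circ X_t^{-1}$. Since transport by a fixed divergence-free field is linear with unique solutions, $\omega_f+\omega_g=\theta=\omega$, which is (1). Because such transport preserves every $L^q$ norm and $\supp f\cap\supp g=\varnothing$, one has $\|\theta(t)\|_{L^1}+\|\theta(t)\|_{L^\infty}=\|f\|_{L^1}+\|g\|_{L^1}+\max(\|f\|_{L^\infty},\|g\|_{L^\infty})\le M$ for all $t$, so the defining property of $C_0$ gives $\|u(t)\|_{L^\infty}\le C_0M$ and hence $|X_t(a)-a|\le\int_0^t\|u(\tau)\|_{L^\infty}\,d\tau\le 2C_0M$ on $[0,1]$. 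This yields $\supp\omega_f(\cdot,t)=X_t(\supp f)\subset B(\supp f,2C_0M)$ (and likewise for $\omega_g$), and with \eqref{dist_supp_f_supp_g} and the triangle inequality, $\dist(\supp\omega_f(\cdot,t),\supp\omega_g(\cdot,t))\ge 100C_0M-4C_0M\ge 90C_0M$, proving (2).

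For (3) the governing constraint is that the bound may \emph{not} depend on $\|g\|_{H^2}$ (which $M$ does not control), so one cannot estimate $\theta=\omega_f+\omega_g$ as a single object and must use the support separation from (2). I would write $u=u_f+u_g$ with $u_f=\nabla^{\perp}\Delta^{-1}T\omega_f$, $u_g=\nabla^{\perp}\Delta^{-1}T\omega_g$, so $\partial_t\omega_f+u_f\cdot\nabla\omega_f+u_g\cdot\nabla\omega_f=0$, and run an $H^s$ energy estimate (apply $\langle\nabla\rangle^s$, pair with $\langle\nabla\rangle^s\omega_f$, use $\mathrm{div}\,u_f=0$ and the commutator bound $\|[\langle\nabla\rangle^s,a]b\|_{L^2}\lesssim\|\nabla a\|_{L^\infty}\|b\|_{H^{s-1}}+\|a\|_{\dot H^s}\|b\|_{L^\infty}$). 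In the $u_g$--term I would use that $\nabla\omega_f(\cdot,t)$ is supported in $B(\supp f,2C_0M)$, which stays a distance $\gtrsim C_0M$ from $\supp\omega_g$; there $u_g$ is smooth with $k$-th derivatives $\lesssim\|\omega_g\|_{L^1}(C_0M)^{-k-1}\le C(M)$ by decay of the kernel of $\nabla^{\perp}\Delta^{-1}T$ away from its source, so replacing $u_g$ by a cutoff copy equal to $1$ on that set (support of measure $\lesssim M_1+(C_0M)^2$ --- this is where $M_1$ enters) and using one more commutator bounds this term by $C(M,M_1)\|\omega_f(\cdot,t)\|_{H^s}^2$. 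The self-interaction $u_f$--term I would treat by the classical 2D-Euler-type $H^s$ estimate, contributing $\lesssim(\|\nabla u_f\|_{L^\infty}+\|\nabla\omega_f\|_{L^\infty})\|\omega_f\|_{H^s}^2$ after using $\|u_f\|_{\dot H^s}\lesssim\|\omega_f\|_{\dot H^{s-1}}\le\|\omega_f\|_{H^s}$ (Lemma~\ref{Biot-Savart2}).

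It remains to control the coefficients. For $\|\nabla u_f\|_{L^\infty}=\|D\nabla^{\perp}\Delta^{-1}(T\omega_f)\|_{L^\infty}$ I would apply Lemma~\ref{Log-type interpolation inequalities-2} --- in the variant with the multiplier $T$ inserted, which only improves the high-frequency terms of its proof since $T$ has symbol $\le1$ obeying Mikhlin's condition --- to get $\|\nabla u_f\|_{L^\infty}\lesssim 1+\|\omega_f\|_{L^\infty}\log_2(10+\|\omega_f\|_{L^2}+\|\nabla\omega_f\|_{L^p}^p)$ for a fixed $p\in(2,\infty)$. Transport preserves $L^q$ norms, so $\|\omega_f(\cdot,t)\|_{L^\infty}=\|f\|_{L^\infty}\le M$ and $\|\omega_f(\cdot,t)\|_{L^2}=\|f\|_{L^2}\le MM_1^{1/2}$, while differentiating the transport equation gives $\frac{d}{dt}\|\nabla\omega_f\|_{L^p}\le\|\nabla u\|_{L^\infty(\supp\nabla\omega_f(\cdot,t))}\|\nabla\omega_f\|_{L^p}$ with $\|\nabla u_g\|_{L^\infty}\le C(M)$ on that support; similarly $\frac{d}{dt}\|\nabla\omega_f\|_{L^\infty}\le\|\nabla u\|_{L^\infty(\supp\nabla\omega_f(\cdot,t))}\|\nabla\omega_f\|_{L^\infty}$ with $\|\nabla f\|_{L^\infty}\lesssim\|f\|_{H^s}$ for $s>2$. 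Setting $\Phi(t)=\log_2(10+\|\nabla\omega_f(\cdot,t)\|_{L^p}^p)$, these combine into a linear differential inequality for $\Phi$ with coefficients depending only on $M,M_1,p$ and $\Phi(0)\le\log_2(10+C\|f\|_{H^s}^p)$; Gronwall on $[0,1]$ bounds $\Phi$, hence $\int_0^1\|\nabla u_f\|_{L^\infty}$ and then $\sup_{[0,1]}\|\nabla\omega_f\|_{L^\infty}$, all by $C(\|f\|_{H^s},M,M_1)$. A final Gronwall in the $H^s$ energy inequality gives $\max_{0\le t\le1}\|\omega_f(\cdot,t)\|_{H^s}\le C(\|f\|_{H^s},M,M_1)$; the same scheme with $s$ replaced by any $k\ge2$ for which $g\in H^k$ (with the usual care at the borderline $k=2$) controls $\|\omega_g(\cdot,t)\|_{H^k}$.

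I expect the main obstacle to be exactly the requirement that the constant in (3) be free of any high-order norm of $g$: this is what forces the decomposition $u=u_f+u_g$ and the use of the support separation, and the delicate technical point is reconciling the localization of $u_g$ near $\supp\omega_f$ with the nonlocality of $\langle\nabla\rangle^s$ in the energy estimate --- handled by the cutoff-plus-commutator step --- and closing the resulting circular dependence between $\|\nabla u_f\|_{L^\infty}$ and $\|\nabla\omega_f\|_{L^p}$ by the logarithmic Gronwall. A subsidiary check is that Lemma~\ref{Log-type interpolation inequalities-2} survives insertion of the smoothing multiplier $T$ for every $\gamma\ge0$, including the Euler endpoint $\gamma=0$.
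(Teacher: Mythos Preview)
The paper does not actually prove this lemma: its entire proof is the single line ``See Kwon \cite{HK}.'' So there is no in-paper argument to compare your proposal against.

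Your sketch is a reasonable and essentially standard route to a result of this type, and in fact matches the architecture of the argument in the cited reference: (1)--(2) follow from linearity of transport by a fixed divergence-free field together with the finite-speed-of-propagation bound $|X_t(a)-a|\le \int_0^t\|u\|_{L^\infty}\le C_0M$, and (3) requires the decomposition $u=u_f+u_g$ precisely because the constant must be independent of any high Sobolev norm of $g$. The two genuinely nontrivial points you have identified --- localizing the nonlocal $u_g$ contribution near $\supp\omega_f$ via kernel decay (this is where Lemma~\ref{kernel_for_velocity for T_gamma,beta} enters) and closing the circular $\|\nabla u_f\|_{L^\infty}\leftrightarrow\|\nabla\omega_f\|_{L^p}$ dependence through the logarithmic inequality of Lemma~\ref{Log-type interpolation inequalities-2} plus Gronwall --- are exactly the mechanisms used in \cite{HK}. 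Your remark that inserting $T_\gamma$ only improves the high-frequency tail in Lemma~\ref{Log-type interpolation inequalities-2} is correct and is implicitly used in the paper (cf.\ the proof of Lemma~\ref{A1-2}, where the same substitution $\theta\mapsto T\theta$ is made without comment). In short, your proposal is more detailed than what the paper supplies and is aligned with the referenced source.
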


      	\begin{proof}
		See Kwon \cite{HK}.
	\end{proof}

		\begin{Lemma}\label{A1-2}
			Suppose that $ \theta_{T,0}\in H^s(\mathbb{R}^2)$ for some $s\ge 2$, with $\| \theta_{T,0}\|_{H^s}\le M<\infty$ for a constant $M>1$. Let $\theta(x,t)$ be the solution to 
			\begin{equation*}
				\begin{cases}
					\partial_t \theta+(u\cdot\nabla)\theta=0,\ \mathbb{R}^2\times (0,1]\\
					u=\nabla^{\perp}\Delta^{-1}T_{}\, \theta,\\
					\theta(x,0)= \theta_{T,0}(x),.
				\end{cases}
			\end{equation*}
			Then
			\begin{equation}\label{forA2condi-2}
				\max\limits_{0\le t \le 1}\|\theta\|_{H^s}\le C
			\end{equation}
			for some constant $C=C(\| \theta_{T,0}\|_{H^s},M).$
		\end{Lemma}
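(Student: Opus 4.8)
The statement is an a priori estimate, so the plan is to run a high-regularity energy estimate for $t\mapsto\|\theta(t)\|_{\dot H^s}$ and close it by a logarithmic Gronwall argument. First I would record the conserved quantities: since $u=\nabla^{\perp}\Delta^{-1}T_{\gamma}\theta$ is divergence free, the transport equation preserves every $L^p$ norm of $\theta$, so using $H^s\hookrightarrow L^\infty$ (valid as $s\ge 2>1$) one gets $\|\theta(t)\|_{L^2}\le M$ and $\|\theta(t)\|_{L^\infty}\le C_sM$ for all $t\in[0,1]$. Then I would control the stretching factor: since $T_\gamma$ is a bounded Fourier multiplier on $L^p$, $1\le p\le\infty$, and on $W^{1,p}$ (its symbol is smooth, radial, equals $1$ near the origin and decays; $\gamma=0$ is the identity), Lemma~\ref{Log-type interpolation inequalities-2} applied to $T_\gamma\theta(t)$ gives, for any fixed $p\in(2,\infty)$,
$$
\|\nabla u(t)\|_{L^\infty}=\bigl\|D\nabla^{\perp}\Delta^{-1}T_{\gamma}\theta(t)\bigr\|_{L^\infty}\lesssim_p 1+\|\theta(t)\|_{L^\infty}\log_2\!\bigl(10+\|\theta(t)\|_{L^2}+\|\nabla\theta(t)\|_p^p\bigr)\lesssim 1+M\log\!\bigl(10+\|\theta(t)\|_{\dot H^s}\bigr),
$$
using the conserved bounds and $\|\nabla\theta\|_{L^p}\lesssim \|\theta\|_{L^2}+\|\theta\|_{\dot H^s}$ (since $s\ge 2>2-2/p$). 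Lemma~\ref{Biot-Savart2} supplies the companion bound $\|u(t)\|_{\dot H^s}\lesssim\|\theta(t)\|_{\dot H^{s-1}}$.

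Next comes the energy estimate. Applying $|\nabla|^{s}$ to the equation, pairing with $|\nabla|^{s}\theta$, using the divergence-free cancellation $\langle u\cdot\nabla|\nabla|^{s}\theta,\,|\nabla|^{s}\theta\rangle=0$, and a Kato--Ponce commutator estimate for $[\,|\nabla|^s,u\cdot\nabla\,]\theta$, one is led to
$$
\tfrac{d}{dt}\|\theta\|_{\dot H^s}^2\ \lesssim\ \|\nabla u\|_{L^\infty}\,\|\theta\|_{\dot H^s}^2\ +\ \|u\|_{\dot H^s}\,\|\nabla\theta\|_{L^\infty}\,\|\theta\|_{\dot H^s}.
$$
The first term is already of the form $\bigl(1+M\log(10+\|\theta\|_{\dot H^s})\bigr)\|\theta\|_{\dot H^s}^2$. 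The second term is the delicate point, which I expect to be the main obstacle: even with the Biot--Savart gain $\|u\|_{\dot H^s}\lesssim\|\theta\|_{\dot H^{s-1}}$ and Gagliardo--Nirenberg interpolation, $\|u\|_{\dot H^s}\|\nabla\theta\|_{L^\infty}$ is only $\lesssim M^{1-1/s}\|\theta\|_{\dot H^s}^{1+1/s}$, which is strictly super-linear in $\|\theta\|_{\dot H^s}$ and so cannot be absorbed into a logarithmic Gronwall — the well-known phenomenon that the vorticity transport equation does not, by itself, close a high-regularity estimate for the transported scalar. I would circumvent this by working with the flow map: writing $\theta(t,X(t,a))=\theta_0(a)$ with $\dot X(t,a)=u(t,X(t,a))$, one differentiates the flow ODE and estimates $X(t,\cdot)$ order by order; the resulting hierarchy is lower-triangular with top-order growth rate exactly $\|\nabla u(t)\|_{L^\infty}$, so no super-linear term appears, and composing with the chain rule and the bound on $\|\nabla u\|_{L^\infty}$ above yields the clean differential inequality $\tfrac{d}{dt}\|\theta(t)\|_{\dot H^s}\lesssim\bigl(1+M\log(10+\|\theta(t)\|_{\dot H^s})\bigr)\|\theta(t)\|_{\dot H^s}$. (For $\gamma=0$ one may instead pass to the velocity formulation, where the commutator $[\,|\nabla|^s,u\cdot\nabla\,]u$ is benign; for general $\gamma$ one may also simply invoke the a priori estimate of Kwon~\cite{HK}, of which Lemma~\ref{A1-nonperiodic}(3) is the compactly supported version.)

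Finally, with $y(t)=10+\|\theta(t)\|_{\dot H^s}$ the last inequality gives $\tfrac{d}{dt}\log\log y(t)\lesssim 1+M$, hence $y(t)\le y(0)^{\exp(C(1+M))}$ on $[0,1]$; since $y(0)=10+\|\theta_0\|_{\dot H^s}\le 10+M$ and $\|\theta(t)\|_{H^s}^2\le\|\theta_0\|_{L^2}^2+\|\theta(t)\|_{\dot H^s}^2$, this is the asserted bound with $C=C(\|\theta_0\|_{H^s},M)$. Apart from the flow-map hierarchy (equivalently, citing Kwon~\cite{HK}), the scheme is the standard logarithmic Beale--Kato--Majda continuation argument; the only extra bookkeeping is that $T_\gamma$ and $T_\gamma^{-1}$ — the latter costing merely a harmlessly absorbed factor $\langle\xi\rangle^{0+}$ — are bounded on all the spaces used.
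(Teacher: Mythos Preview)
Your overall scheme --- $L^p$ conservation, the logarithmic interpolation inequality for $\|\nabla u\|_{\infty}$, a high-order energy estimate, and Gr\"onwall --- is the same as the paper's, and you correctly isolate the obstacle: with the $(L^2,L^\infty)$ split in Kato--Ponce the cross-term $\|u\|_{\dot H^s}\|\nabla\theta\|_{L^\infty}$ is genuinely super-linear in $\|\theta\|_{\dot H^s}$ and cannot be absorbed into a log-Gr\"onwall. However, your proposed cure via the flow-map hierarchy is not justified as written. The hierarchy for $D^kX$ is indeed lower-triangular in the $D^jX$, with top-order coefficient $\nabla u$, but the forcing at each order $k\ge 2$ contains $D^ju$ for $2\le j\le k$; since $D^ju$ is essentially $D^{j-1}\theta$ at the symbol level, high derivatives of $\theta$ re-enter through those ``lower-order'' terms and you face the same super-linear coupling you were trying to avoid. (For non-integer $s$ the flow-map route is additionally awkward to formalize.) Falling back on ``cite Kwon'' would close the gap, but then you are outsourcing the entire content of the lemma.

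The paper sidesteps the obstacle by a simple two-step decoupling that you already have all the ingredients for. First it runs the log-Gr\"onwall not on $\|\theta\|_{H^s}$ but on the lower-order quantity $\|\nabla\theta\|_p^p$ for a \emph{fixed finite} $p>2$: the transport inequality
\[
\frac{1}{p}\frac{d}{dt}\|\nabla\theta\|_p^p\ \le\ \|\nabla u\|_\infty\,\|\nabla\theta\|_p^p,
\]
combined with Lemma~\ref{Log-type interpolation inequalities-2} (exactly your displayed bound for $\|\nabla u\|_\infty$), closes as an Osgood inequality in $\|\nabla\theta\|_p^p$ alone and yields a uniform bound on $\|\nabla\theta\|_p$ --- and hence on $\|\nabla u\|_\infty$ --- for all $t\in[0,1]$, with constants depending only on $M$. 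Second, for the $H^s$ estimate the paper uses a Kato--Ponce split with exponents $(3,6)$ in place of $(2,\infty)$:
\[
\|[J^s,u\cdot\nabla]\theta\|_{L^2}\ \lesssim\ \|J^{s-1}Du\|_{L^3}\,\|\nabla\theta\|_{L^6}\;+\;\|\nabla u\|_{L^\infty}\,\|J^s\theta\|_{L^2}.
\]
Here $\|J^{s-1}Du\|_{L^3}\lesssim\|\theta\|_{H^s}$ carries the single growing factor, while $\|\nabla\theta\|_{L^6}$ and $\|\nabla u\|_{L^\infty}$ are already bounded by step one; the resulting differential inequality for $\|\theta\|_{H^s}$ is therefore \emph{linear}, and an ordinary Gr\"onwall finishes. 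What this buys over your route is that no flow-map calculus, no passage to the velocity formulation, and no external black box is needed; the extra idea is just the preliminary $W^{1,p}$ estimate, which cleanly decouples the control of $\|\nabla u\|_\infty$ from the $H^s$ energy.
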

		\begin{proof}
			Let $\phi(x,t)$ be  denote the flow map defined by
			\begin{equation*}
				\begin{cases}
					\partial_t \phi(x,t)=u(\phi(x,t),t)\\
					\phi(x,0)=x.
				\end{cases}
			\end{equation*}
            The transport equation then implies
			\begin{align}\label{e:transport}
            \theta(\phi(x,t),t)= \theta_{T,0}(x).
            \end{align}
            Hence the $L^p$ norms of $\theta$ are preserved it implies that $$\|\theta(\cdot,t)\|_p=~\| \theta_{T,0}\|_{p}, \quad 1\le p\le \infty,\quad  0\le t\le 1.$$ 
            
            We claim that 
            \begin{equation}\max\limits_{0\le t\le 1}\|u(\cdot,t)\|_{\infty}\le CM.\label{e:b_u_inf}
            \end{equation}
            Indeed, writing $u=K*T\theta,$ where $K$ is the Biot–Savart kernel, we split into local and nonlocal contributions. Choosing exponents $1<p<2<q<\infty$, H\"older’s inequality yields
			\begin{align*}
				\|u\|_{\infty}  
				&\le \|K\mathbbm{1}_{|\cdot|\le 1}\|_{p}\|T_{}\,\theta\|_{p'}+\|K\mathbbm{1}_{|\cdot|> 1}\|_{q}\|T_{}\,\theta\|_{q'}\\
				&\lesssim \left(\int\limits_{0}^1\dfrac{rdr}{r^p}\right)^{\frac{1}{p}}\|\theta\|_{p'}+\left(\int\limits_{1}^{\infty}\dfrac{rdr}{r^q}\right)^{\frac{1}{q}}\|\theta\|_{q'}\\
				&\lesssim C\| \theta_{T, 0}^{}\|_{p'}+C\| \theta_{T, 0}^{}\|_{q'}\\
				&\le CM.
			\end{align*}
			Observe that, from \eqref{e:b_u_inf}, we have
			\begin{equation*}
				|\phi(x,t)-x|\le \int\limits_{0}^{t}|\partial_s\phi(x,t)|ds\le \max\limits_{0\le t\le 1}\|u(\cdot,t)\|_{\infty}t\le CMt.
			\end{equation*}
            so the support of $\theta(\cdot,t)$ remains within a fixed neighborhood of the support of $\theta_{T,0}$.

			In order to estimate the Sobolev norm of $\theta$ we need to control $\nabla^{\perp}\Delta^{-1}T\theta$ when $0\le t\le 1$ and $x\in \supp(\theta(\cdot,t))$. We have 
			\begin{align*}
				\left|\partial^{\alpha}\nabla^{\perp}\Delta^{-1}T_{} \theta\right|&=\left|\int\limits_{\mathbb{R}^2}\partial^\alpha H(x-y)\theta(y)dy\right|\\
				&\le \|\partial^\alpha H\|_{L^{\infty}(\mathbb{R}^2)}\| \theta_{T,0}\|_{L^1},
			\end{align*}
			where $T=T_{\gamma}$ and $H=H_{\gamma}$ is the kernel of the Fourier multiplier $\nabla^{\perp}\Delta^{-1}T_{} $ which by Lemma \ref{kernel_for_velocity for T_gamma,beta} satisfies the estimate
			\begin{equation*}
				\left|\partial^{\alpha}H(z)\right|\lesssim_{\alpha,\gamma}\dfrac{1}{|z|^{|\alpha|+1}},\ \forall z \ne 0.
			\end{equation*}
			It follows that		\begin{equation}\label{esttransport-2}
				\max\limits_{0\le t\le 1}\max\limits_{x\in \mathbb{R}^2}	\left|\partial^{\alpha}\nabla^{\perp}\Delta^{-1}T_{} \theta\right|\lesssim_{\alpha,\gamma}1.
			\end{equation}
			Differentiating \eqref{e:transport} with respect to $x$, multiplying by $\nabla\theta\|\nabla \theta\|_{p}^{p-2}$, and integrating, we obtain the standard transport energy estimate
			\begin{equation*}
				\dfrac{1}{p}\dfrac{d}{dt}\|\nabla\theta\|_{p}^p=-\int\limits_{\mathbb{R}^2}\nabla u\nabla\theta\cdot\nabla\theta\|\nabla \theta\|_{p}^{p-2}.	
			\end{equation*}
		This implies	\begin{equation}\label{difineqLRE_p-2}
				\dfrac{1}{p}\dfrac{d}{dt}\|\nabla\theta\|_{p}^p\le \|\nabla(\nabla^{\perp}\Delta^{-1}T_{} \theta)\|_{\infty}\|\nabla\theta\|_{p}^p.
			\end{equation}
			By Lemma \ref{Log-type interpolation inequalities-2} and the $L^p$- preservation property $\|\theta(\cdot,t)\|_p=\| \theta_{T,0}^{}\|_{p}$, one has 
			\begin{align*}
				\|\nabla(\nabla^{\perp}\Delta^{-1}T_{}\theta)\|_{\infty}&\lesssim_{p} 1+\|T_{}\theta\|_{\infty}\log(10+\|T_{}\theta\|_{2}+\|\nabla T_{}\theta(\cdot,t)\|_p^p), \\
				&\lesssim_{p} 1+\|\theta\|_{\infty}\log(10+\|\theta\|_{2}+\|\nabla \theta(\cdot,t)\|_p^p),  \\
				&\lesssim_{p}1+\| \theta_{T,0}\|_{\infty}\log(10+\| \theta_{T,0}\|_{2}+\|\nabla \theta(\cdot,t)\|_p^p).
			\end{align*}
			To control  $\|\theta\|_{H^s}$ with $s\ge 2$, we apply the Kato–Ponce commutator estimate (Lemma \ref{cmrest},\cite{Dongli2019})
			\begin{align*}
				\dfrac{d}{dt}\|J^s\theta\|_{2}&\le \|[J^s, \nabla^{\perp}\Delta^{-1}T_{}\, \theta\cdot\nabla]\theta\|_{2}\\
				&\lesssim \|J^{s-1}D\nabla^{\perp}\Delta^{-1}T_{}\, \theta\|_{3}\|\nabla \theta\|_{6}+\|D\nabla^{\perp}\Delta^{-1}T_{}\, \theta\|_{\infty}\| J^{s}\theta\|_{2}\\
				&\lesssim C\|J^s\theta\|_{2},
			\end{align*}
            Using Sobolev embedding and Lemma \ref{Biot-Savart2}, we obtain
            \begin{align}
                \frac{d}{dt}\|\theta\|_{H^s}^2\le C\|\theta\|_{H^s}^2
            \end{align}
			for a constant $C$ depending on $\| \theta_{T0}\|_{H^s},\ M$, and $s$. Applying Grönwall’s inequality yields 
            \begin{align}
                \sup_{0\le t\le 1}\|\theta(\cdot,t)\|_{H^s}\le C(\| \theta_{T0}\|_{H^s},M),
            \end{align}
            which is exactly \eqref{forA2condi-2}.
		\end{proof}
  
Now, we are ready to prove Theorem \ref{thm:lwp_Hadmard}.

		\begin{proof}[Proof of Theorem \ref{thm:lwp_Hadmard}]
			We will apply Proposition \ref{thm7_Kato75} for the following equation
			\begin{equation}\label{LRE1-2}
				\partial_t \theta+(\nabla^{\perp}\Delta^{-1}T_{}\, \theta)\cdot\nabla)\theta=0,\ 	\theta(x,0)= \theta_{T,0}(x). 
			\end{equation}
			It boils down to check the conditions of Proposition \ref{thm7_Kato75}. Firstly, let's choose $Y= H^t$ for $t\ge s$, and $X=H^s$. We define the operator
\[
A(t,\theta)\theta := \big(\nabla^\perp\Delta^{-1}T_\gamma \theta(\cdot,t)\big)\cdot\nabla \theta.
\]
We also set $S=(1-\Delta)^{(t-s)/2}$. Note that 
$\operatorname{div}(\nabla^\perp \Delta^{-1}T_\gamma\theta)=0$, 
so $A(t,\theta)$ is skew-adjoint on $L^2$.
 For the condition $A_1$, we need to check
			\begin{equation*}
				\|e^{-s\{(\nabla^{\perp}\Delta^{-1}T_{}\, \theta\cdot\nabla\}}\theta\|_{H^s}^2\le e^{\beta s}\|\theta\|_{H^s},
			\end{equation*}
			for some $s\in [0,\infty)$, $t\in [0,T]$, $\theta\in W$ with $W$ will be a ball with center $0$ and the radius $R$, and $\beta$ is a real number. This condition is guaranteered by the Lemma \ref{A1-nonperiodic}. This lemma implicitly relies on the propagation of support for the flow map $ \phi(x,t) $ defined by:
\[
\partial_t \phi(x,t) = u(\phi(x,t), t), \quad \phi(x,0) = x,
\]
where $ u = \nabla^\perp \Delta^{-1} T\theta $. Crucially, for $ \theta \in W \subset H^t $ ($ t \geq s > 2 $), the velocity $ u = \nabla^\perp \Delta^{-1} T \theta $ satisfies:
\[
\|u\|_{L^\infty} + \|\nabla u\|_{L^\infty} \leq C(R), \quad R = \sup_{\theta \in W} \|\theta\|_{H^t},
\]
due to Sobolev embedding $ H^t \hookrightarrow C^1 $ for $ t > 2 $. This Lipschitz bound ensures the flow map $ \phi $ is well-defined and measure-preserving.
The operator $ A(t, \theta) $ generates the linear transport equation:
\[
\partial_t \theta + (u \cdot \nabla)\theta = 0, \quad \theta(0) = \theta_0,
\]
with solution $ \theta(t) = e^{-tA(t,\theta)} \theta_0 $. Since $ \operatorname{div} u = 0 $, the flow preserves Lebesgue measure, and:
\[
\|\theta\|_{L^p} = \|\theta_0\|_{L^p}, \quad 1 \leq p \leq \infty.
\]

For the $ H^s $ norm ($ s > 2 $), we use Lemma \ref{A1-nonperiodic}’s support control and commutator estimates (as in Lemma \ref{A1-2}’s proof). Applying $ J^s = (1 - \Delta)^{s/2} $:
\[
\frac{d}{ds} \|J^s \theta\|_{L^2}^2 = -2 \langle J^s (u \cdot \nabla \theta), J^s \theta \rangle_{L^2}.
\]

By the Kato–Ponce commutator estimate (Lemma \ref{cmrest}):
\[
\| [J^s, u] \cdot \nabla \theta \|_{L^2} \leq C \left( \|\nabla u\|_{L^\infty} \|J^s \theta\|_{L^2} + \|J^s u\|_{L^4} \|\nabla \theta\|_{L^4} \right),
\]
and since $ \operatorname{div} u = 0 $, the term $ \langle u \cdot \nabla J^s \theta, J^s \theta \rangle_{L^2} = 0 $. Using Sobolev embeddings and Lemma \ref{Biot-Savart2}:
\[
\|J^s u\|_{L^4} \lesssim \|u\|_{H^s} \leq C \|\theta\|_{H^s} = C(R), \quad \|\nabla \theta\|_{L^4} \lesssim \|\theta\|_{H^s}.
\]

Thus:
\[
\frac{d}{ds} \|J^s \theta\|_{L^2} \leq C(R) \|J^s \theta\|_{L^2},
\]
and Grönwall’s inequality gives:
\[
\|\theta(s)\|_{H^s} \leq e^{C(R)s} \|\theta_0\|_{H^s}.
\]

This implies
\[
\|e^{-s\mathcal{A}(t,\theta)}\|_{B(H^s, H^s)} \leq e^{\beta s} \quad \text{with } \beta = C(R),
\]
satisfying (A1).

			To verify $A2$, we take $S=(1-\Delta)^{\frac{t-s}{2}}$ and observe that \[A(\theta)=SA(\theta)S^{-1}-[S, A(\theta)]S^{-1},\] where
		$[S, A(\theta)]=[(1-\Delta)^{\frac{t-s}{2}}, \nabla^{\perp}\Delta^{-1}T_{} \theta\cdot\nabla].$
			For each $\theta \in H^s$, we have 
			\begin{equation*}
				\|SA(\theta)S^{-1}\theta\|_{H^s}\lesssim \|\theta\|_{H^s} 
			\end{equation*}
			\text{ by Lemma }\ref{Biot-Savart2}. Therefore, if we choose $B(\theta)=	[S, A(\theta)]S^{-1}$, then we have the following decomposition
			\begin{equation*}
				SA(\theta)S^{-1}=A(\theta)+B(\theta),
			\end{equation*}
			satisfying the condition $A2$. Indeed, since $ \nabla$ is bounded by $(1-\Delta)^{\frac{t-s}{2}}$, it follows from Lemma \ref{A2_Kato1975} that $B(\theta)$ is bounded if
			\begin{equation}\label{keyestimatesforA2condition-2}
				\nabla\cdot	\left(\nabla^{\perp}\Delta^{-1}T_{} \theta\right)\in H^{s-1}.
			\end{equation}

			This is guaranteed by \Cref{Biot-Savart2}
			and we completely verified the condition $A2$.\\	
			Next step, we check the condition $A3$. First, for each $(t,\theta)\in [0,T]\times W$, we have $A(t,\theta)\in B(H^t,H^s)$. Now we prove that the map $t\rightarrow A(t,\theta)$ is continuous in norm. Indeed, we have
			\begin{align*}
				\|\nabla^{\perp}\Delta^{-1}T_{} \theta(t)\cdot\nabla f-\nabla^{\perp}\Delta^{-1}T_{} \theta(t')\cdot\nabla f\|_{H^s}&=\|\nabla^{\perp}\left(\nabla^{\perp}\Delta^{-1}T_{} (\theta(t)-\theta(t'))\right)\cdot\nabla f\|_{H^s}\\
				&\le 	\|\nabla^{\perp}\left(\nabla^{\perp}\Delta^{-1}T_{} (\theta(t)-\theta(t'))\right)\|_{H^s}\|\nabla f\|_{H^s}\\
				&\le \|\theta(t)-\theta(t')\|_{H^s}\|f\|_{H^t},
			\end{align*}
			for all $f\in H^t$ with $t\ge s+1$. Hence, we have   
			\begin{align*}
				\|\nabla^{\perp}\Delta^{-1}T_{} \theta(t)\cdot\nabla -\nabla^{\perp}\Delta^{-1}T_{} \theta(t')\cdot\nabla\|_{H^t, H^s}\le\|\theta(t)-\theta(t')\|_{H^s}
			\end{align*}
			For each $t\in [0,T]$,  we have the map $y\rightarrow A(t,\theta)$ is Lipschitz-continuous in the sense that 
			\begin{align*}
				\|A(t,\theta_1)-A(t,\theta_2)\|_{H^t,H^s}\le \mu_1\|\theta_1-\theta_2\|_{H^s}.
			\end{align*}
			Indeed, we have
			\begin{align*}
				\|(A(t,\theta_1)-A(t,\theta_2))\theta\|_{H^s}&=\|(\nabla^{\perp}\Delta^{-1}T_{} \theta_1)\cdot\nabla\theta-(\nabla^{\perp}\Delta^{-1}T_{} \theta_2)\cdot\nabla \theta\|_{H^s}\\
				&\le \|\nabla^{\perp}\Delta^{-1}T_{} (\theta_1-\theta_2)\|_{H^s}\|\nabla \theta\|_{\infty}\\
				&\lesssim \|\theta_1-\theta_2\|_{H^s}\|\nabla\theta\|_{\infty}.
			\end{align*}

Next, we verify check condition $A4$. Let $W$ be the ball in $H^{t+1}\subset Y = H^t$ ($t\ge s>2)$ centered at $\theta_0$ of radius $R>0$, for all $ \theta \in W$ and $t \in [0, T]$,
\begin{align}
A(t, \theta)\theta_0 \in H^t \quad \text{and} \quad \|A(t, \theta)\theta_0\|_{H^t} \leq \lambda_2.
\end{align}

By Lemma \ref{Biot-Savart2}, we have
    \begin{align}\label{tag_1}
\| u \|_{H^{t+1}} \| \nabla \theta_0 \|_{H^{t-1}} \leq C_1 R \cdot R = C_1 R^2.
    \end{align}
Using the tame product estimate in $H^t$ ($t > 2$) and \eqref{tag_1}, we have
\begin{equation}\label{tag_2}
\begin{aligned}
\| u \cdot \nabla \theta_0 \|_{H^t} &\leq C \left( \| u \|_{H^{t+1}} \| \nabla \theta_0 \|_{H^{t-1}} + \| u \|_{H^t} \| \nabla \theta_0 \|_{H^t} \right)\\
&\le 2CC_1R^2=:\lambda_2. 
\end{aligned}
\end{equation}
Hence, we have 
$A(t, \theta)\theta_0 = u \cdot \nabla \theta_0 \in H^t $ and $
\| A(t,\theta)\theta_0 \|_{H^t} \leq \lambda_2$ for all $\theta \in W$ and $t \in [0, T]$.
Thus (A1)--(A4) of Proposition \ref{thm7_Kato75} are verified and we finish the proof
		\end{proof}
		\bigskip

\section{Non-uniform dependence of the data-to-solution map}\label{sec:nonunif}
We will begin with the following comparison lemma.
\begin{Lemma}\label{Consequence-Biot-Savart2} 
	Let $s>2$ and let $u_{T_\gamma}$, $u_e$ be the velocities associated with the solutions of \eqref{e1} and \eqref{e3}, respectively. We have 
	\begin{equation}\label{ue-uT(Hs)-theta_e-theta(T)H(s-1)}
		\|u_e-u_{T_\gamma}\|_{{H}^s}\le C\|\theta_e- \theta_{T}\|_{{H}^{s}}+ C\|T-\id\| \|\theta_e\|_{{H}^{s}},
	\end{equation}
	for some constant $C>0$.
\end{Lemma}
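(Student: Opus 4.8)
The plan is to split $u_e-u_{T_\gamma}$ into a piece governed by the distance between the vorticities and a piece governed by the distance of $T$ from the identity, and to bound each with the same Fourier-multiplier computation that proves Lemma~\ref{Biot-Savart2}. Since $u_e=\nabla^{\perp}\Delta^{-1}\theta_e$ and $u_{T_\gamma}=\nabla^{\perp}\Delta^{-1}T_\gamma\theta_T$, adding and subtracting $\nabla^{\perp}\Delta^{-1}T_\gamma\theta_e$ gives
\begin{equation*}
u_e-u_{T_\gamma}=\nabla^{\perp}\Delta^{-1}T_\gamma(\theta_e-\theta_T)+\nabla^{\perp}\Delta^{-1}(\id-T_\gamma)\theta_e .
\end{equation*}
For the first term, the estimate of Lemma~\ref{Biot-Savart2} is nothing but the multiplier bound $\|\nabla^{\perp}\Delta^{-1}T_\gamma f\|_{H^s}\le C\|f\|_{\dot H^{s-1}}$ (the symbol $T_\gamma(|\xi|)$ being bounded), which, applied to $f=\theta_e-\theta_T$, yields $\|\nabla^{\perp}\Delta^{-1}T_\gamma(\theta_e-\theta_T)\|_{H^s}\le C\|\theta_e-\theta_T\|_{\dot H^{s-1}}\le C\|\theta_e-\theta_T\|_{H^s}$, the last inequality because $|\xi|^{s-1}\le\langle\xi\rangle^{s}$ for $s>2$.

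For the second term I use that $\nabla^{\perp}\Delta^{-1}$ and $\id-T_\gamma$ are commuting Fourier multipliers, so that
\begin{equation*}
\nabla^{\perp}\Delta^{-1}(\id-T_\gamma)\theta_e=(\id-T_\gamma)\,\nabla^{\perp}\Delta^{-1}\theta_e=(\id-T_\gamma)\,u_e .
\end{equation*}
Because $T_\gamma$ is a radial multiplier, $\id-T_\gamma$ commutes with $\langle\nabla\rangle^{s}$ and is therefore bounded on $H^s$ with operator norm $\|T-\id\|$; hence $\|(\id-T_\gamma)u_e\|_{H^s}\le\|T-\id\|\,\|u_e\|_{H^s}$. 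Applying Lemma~\ref{Biot-Savart2} in the case $\gamma=0$ (the classical Biot–Savart law) gives $\|u_e\|_{H^s}\le C\|\theta_e\|_{\dot H^{s-1}}\le C\|\theta_e\|_{H^s}$, so the second term is $\le C\|T-\id\|\,\|\theta_e\|_{H^s}$. Adding the two bounds yields \eqref{ue-uT(Hs)-theta_e-theta(T)H(s-1)}.

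There is no serious obstacle here; the argument is bookkeeping with Fourier multipliers, exactly parallel to Lemma~\ref{Biot-Savart2}. The one point worth a little attention is in the second term: $\nabla^{\perp}\Delta^{-1}$ formally costs a factor $|\xi|^{-1}$ and could a priori spoil the smallness carried by $\id-T_\gamma$. Commuting $\nabla^{\perp}\Delta^{-1}$ past $\id-T_\gamma$, so that it acts on $\theta_e$ first, resolves this: $\id-T_\gamma$ then sees the already-regularized velocity $u_e$ and contributes only its small operator norm. (Equivalently one notes that $1-T_\gamma(|\xi|)$ vanishes quadratically at $\xi=0$, so $\tfrac{1-T_\gamma(|\xi|)}{|\xi|}$ is a bounded symbol comparable to $\|T-\id\|$, but the commutation argument is cleaner.) As in the proof of Lemma~\ref{Biot-Savart2}, the low-frequency ($L^2$) contributions to the velocities are controlled using $\theta_e,\theta_T\in\dot H^{-1}$, which is part of the standing hypotheses on the data in this section.
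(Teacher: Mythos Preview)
Your proof is correct and follows essentially the same route as the paper: the same add-and-subtract splitting $u_e-u_{T_\gamma}=\nabla^{\perp}\Delta^{-1}T_\gamma(\theta_e-\theta_T)+\nabla^{\perp}\Delta^{-1}(\id-T_\gamma)\theta_e$, followed by the Fourier-multiplier estimate of Lemma~\ref{Biot-Savart2} on each piece. The only cosmetic difference is that for the second piece the paper bounds $\|\nabla^{\perp}\Delta^{-1}(T-\id)\theta_e\|_{\dot H^s}$ directly from its symbol, while you first rewrite it as $(\id-T_\gamma)u_e$ and then invoke Lemma~\ref{Biot-Savart2} for $u_e$; the two computations are equivalent.
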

\begin{proof}
	Using the same arguments as in Lemma \ref{Biot-Savart2}, we compute
	\begin{align*}
		\|u_{T_\gamma}-u_e\|_{\dot{H}^s}^2&=\|\nabla^{\perp}\Delta^{-1}T_{}  \theta_{T}-\nabla^{\perp}\Delta^{-1} \theta_e\|_{\dot{H}^s}^2\notag\\
		&\le \|\nabla^{\perp}\Delta^{-1}T_{} ( \theta_{T}- \theta_e)\|_{\dot{H}^s}^2+ \|\nabla^{\perp}\Delta^{-1}(T_{} -\id ) \theta_e\|_{\dot{H}^s}^2\notag\\
		&\le C\| \theta_{T}- \theta_e\|_{\dot{H}^{s-1}}^2+  \|T-\id\|^2 \|\theta_e\|_{\dot{H}^{s-1}}^2 \notag \\
		&\le C\| \theta_{T}- \theta_e\|_{\dot{H}^{s}}^2+  \|T-\id\|^2 \|\theta_e\|_{\dot{H}^{s}}^2, \notag 
	\end{align*}	
	where in the last step we used $\dot{H}^{s}\hookrightarrow \dot{H}^{s-1} $.	Similarly, we find
	\begin{align*}
		\|u_{T_\gamma}-u_e\|_{L^2}^2
		&\le  \| \theta_{T}- \theta_e\|_{{H}^{s}}^2 + \|T-\id\|^2 \|\theta_e\|_{{H}^{s}}^2. \notag
	\end{align*} 
	Combining the above estimates together, we yield the desired inequality.
\end{proof}

The following lemma gives us the comparison between two vorticities corresponding to  \eqref{e1} and \eqref{e3}.
\begin{Lemma}\label{l1-2}
	Let $s> 2$ and let $T$ be as above and $\theta_{e,0}\in H^{s+1}$. We have 
	\begin{equation}\label{e5-2}
		\|\theta_e(t)- \theta_{T}(t)\|_{{H}^{s}}\le C(s,t,T) \qquad \forall\, t\ge 0
	\end{equation}
	where  
	\begin{align*}
		C(s,t,T)&= C\Big(\|\theta_{e,0}- \theta_{T,0}\|_{H^s}+ \|T-id\| \sup_{[0,t]}\|\theta_e(\tau)\|_{H^{s+1}}^2\Big) e^{t\cdot \sup\limits_{[0,t]}(\|u_{T_\gamma}(\tau)\|_{H^s}+\|\theta_e(\tau)\|_{H^{s+1}})}
	\end{align*}
\end{Lemma}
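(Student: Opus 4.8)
The plan is to run a Grönwall argument on the difference of the two vorticities. Set $w:=\theta_e-\theta_T$ and subtract the two transport equations: since $\partial_t\theta_e+(u_e\cdot\nabla)\theta_e=0$ and $\partial_t\theta_T+(u_{T_\gamma}\cdot\nabla)\theta_T=0$, writing $(u_e\cdot\nabla)\theta_e-(u_{T_\gamma}\cdot\nabla)\theta_T=(u_e-u_{T_\gamma})\cdot\nabla\theta_e+(u_{T_\gamma}\cdot\nabla)w$ gives
\begin{equation*}
\partial_t w+(u_{T_\gamma}\cdot\nabla)w=-\big((u_e-u_{T_\gamma})\cdot\nabla\big)\theta_e,
\end{equation*}
a linear transport equation for $w$ with divergence-free drift $u_{T_\gamma}$ and a source term measuring the mismatch of the velocities.

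Next I would perform the $H^s$ energy estimate. Applying $J^s=(1-\Delta)^{s/2}$ and pairing with $J^s w$ in $L^2$, the drift contribution splits, via the Kato--Ponce commutator estimate (Lemma~\ref{cmrest}), into the commutator $[J^s,u_{T_\gamma}\cdot\nabla]w$, controlled by $\|\nabla u_{T_\gamma}\|_{L^\infty}\|w\|_{H^s}+\|u_{T_\gamma}\|_{H^s}\|\nabla w\|_{L^\infty}\lesssim \|u_{T_\gamma}\|_{H^s}\|w\|_{H^s}$ (using $H^{s-1}\hookrightarrow L^\infty$ for $s>2$), plus the term $\langle(u_{T_\gamma}\cdot\nabla)J^sw,J^sw\rangle$, which vanishes because $\operatorname{div}u_{T_\gamma}=0$. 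For the source term I would apply the tame product estimate in $H^s$ together with Sobolev embedding to get $\|(u_e-u_{T_\gamma})\cdot\nabla\theta_e\|_{H^s}\lesssim \|u_e-u_{T_\gamma}\|_{H^s}\|\theta_e\|_{H^{s+1}}$, and then invoke the comparison Lemma~\ref{Consequence-Biot-Savart2} to replace $\|u_e-u_{T_\gamma}\|_{H^s}$ by $C\|w\|_{H^s}+C\|T-\mathrm{id}\|\|\theta_e\|_{H^s}$. Combining these, and using $\|\theta_e\|_{H^s}\le\|\theta_e\|_{H^{s+1}}$, yields the differential inequality
\begin{equation*}
\frac{d}{dt}\|w\|_{H^s}\le C\big(\|u_{T_\gamma}\|_{H^s}+\|\theta_e\|_{H^{s+1}}\big)\|w\|_{H^s}+C\|T-\mathrm{id}\|\,\|\theta_e\|_{H^{s+1}}^2 .
\end{equation*}

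Finally, Grönwall's inequality applied to this ODE inequality, bounding the time-dependent coefficients by their suprema over $[0,t]$, produces exactly the claimed bound with $C(s,t,T)$ as stated. The main technical point to watch is the derivative loss in the source term: one needs $\nabla\theta_e\in H^s$, which is precisely why the hypothesis $\theta_{e,0}\in H^{s+1}$ is imposed, and one must know that the $H^{s+1}$ norm of $\theta_e$ persists on $[0,t]$ (a consequence of local well-posedness at regularity $s+1$, cf. Lemma~\ref{A1-2}) to justify replacing $\|\theta_e(\tau)\|_{H^{s+1}}$ by its supremum. The other point is careful bookkeeping so that the operator-norm factor $\|T-\mathrm{id}\|$ enters only through Lemma~\ref{Consequence-Biot-Savart2}; everything else is a standard transport-type energy estimate.
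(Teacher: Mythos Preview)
Your proposal is correct and matches the paper's proof essentially line by line: the same subtraction and splitting of the nonlinearity into a transport term for $w$ (handled by the Kato--Ponce commutator plus the divergence-free cancellation) and a source term $(u_e-u_{T_\gamma})\cdot\nabla\theta_e$ (handled by a product estimate and Lemma~\ref{Consequence-Biot-Savart2}), followed by Gr\"onwall. The only cosmetic differences are that the paper works with $\tfrac12\tfrac{d}{dt}\|w\|_{H^s}^2$ and invokes the Gr\"onwall variant of Lemma~\ref{Lem3.3page 125: Chemin et. al.}, and writes $\|\nabla\theta_e\|_{H^s}$ where you write $\|\theta_e\|_{H^{s+1}}$.
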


\begin{proof}
	We subtract \eqref{e1} from \eqref{e3} and then apply $J^s=(\id-\Delta)^{s/2}$ to both sides of the resulting equation. Multiplying the equation by $J^s(\theta_e- \theta_{T})$ and integrating, we obtain
	\begin{align*}\nonumber 
		\frac{1}{2}&\frac{d}{dt}\|\theta_e- \theta_{T}\|_{{H}^s}^2
		\le    \\ 
		&\le
		\big|\big\langle J^s\big((u_{T_\gamma}\cdot\nabla)(\theta_e- \theta_{T})\big), J^s(\theta_e- \theta_{T})\big\rangle_{L^2}\big|
		+
		\big|\big\langle J^s\big(((u_e-u_{T_\gamma})\cdot\nabla)\theta_e\big),  J^s(\theta_e- \theta_{T})\big\rangle_{L^2}\big|
		\\            \nonumber
		&=I_1+I_2.
	\end{align*}
	
	Using the commutator estimates \eqref{cmrest}, Sobolev's lemma and the divergence-free constraint, we have 
	\begin{align}\label{e8}
		I_1&= 	\big|\big\langle J^s\big((u_{T_\gamma}\cdot\nabla)(\theta_e- \theta_{T})\big)-\big((u_{T_\gamma}\cdot  \nabla)J^s(\theta_e- \theta_{T})\big), J^s(\theta_e- \theta_{T})	\big\rangle_{L^2}\big|\\
		&\le C \big\|J^s\big((u_{T_\gamma}\cdot\nabla)(\theta_e- \theta_{T})\big)-\big((u_{T_\gamma}\cdot \nabla)J^s(\theta_e- \theta_{T})\big)\big\|_{L^2}\|\theta_e- \theta_{T}\|_{{H}^s}\notag \\
		&\le C\big(\|\nabla u_{T_\gamma}\|_{L^\infty}\| \theta_e- \theta_{T}\|_{{H}^s} + \|u_{T_\gamma}\|_{H^s}\|\nabla (\theta_e- \theta_{T})\|_{L^\infty} \big)\|\theta_e- \theta_{T}\|_{{H}^s}\notag \\
		&\le C\big(\| u_{T_\gamma}\|_{H^s}\| \theta_e- \theta_{T}\|_{{H}^s} + \|u_{T_\gamma}\|_{H^s}\|\nabla (\theta_e- \theta_{T})\|_{H^{s-1}} \big)\|\theta_e- \theta_{T}\|_{{H}^s}\notag \\
		&\le C \| u_{T_\gamma}\|_{H^s}\| \theta_e- \theta_{T}\|_{{H}^s}^2. \notag
	\end{align}

	\bigskip
	For $I_2$, by Lemma \ref{Consequence-Biot-Savart2} we have
	
	\begin{align}\label{e9}
		I_2&\le \big\|J^s\big((u_e-u_{T_\gamma})\cdot\nabla \theta_e\big)\big\|_{L^2}\|\theta_e- \theta_{T}\|_{H^s}\\
		&\le C\big(\|u_e-u_{T_\gamma}\|_{L^\infty}\|\nabla \theta_e\|_{H^s}+\|u_e-u_{T_\gamma}\|_{H^s}\|\nabla \theta_e\|_{L^\infty}\big)\|\theta_e- \theta_{T}\|_{H^s}\notag\\
		&\le C\big(\|\theta_e- \theta_{T}\|_{H^{s}}\|\nabla \theta_e\|_{H^s}+\|\nabla \theta_e\|_{H^s}\|T-id\| \|\theta_e\|_{{H}^{s}}\big)\|\theta_e- \theta_{T}\|_{H^s}\notag\\
		&= C\big(\|\nabla \theta_e\|_{H^s}\|\theta_e- \theta_{T}\|_{H^s}^2+\|\nabla \theta_e\|_{H^s}\|T-id\| \|\theta_e\|_{{H}^{s}} \|\theta_e- \theta_{T}\|_{H^s}\big).\notag
	\end{align}
	Thus, we derive
	\begin{align}\label{e10_goal}
		\frac{1}{2}\frac{d}{dt}\|\theta_e- \theta_{T}\|_{H^s}^2&\le C \big(\| u_{T_\gamma}\|_{H^s}+ \|\nabla \theta_e\|_{H^s}\big) \|\theta_e- \theta_{T}\|_{H^s}^2\\\nonumber
		&+ C\|\nabla \theta_e\|_{H^s}\|T-id\| \|\theta_e\|_{{H}^{s}} \|\theta_e- \theta_{T}\|_{H^s}
	\end{align}
	By Lemma \ref{Lem3.3page 125: Chemin et. al.}, we get
	
	\begin{align}\label{e11}
		\|(\theta_e- \theta_{T})(t)\|_{H^s}&\le C\|\theta_{e,0}- \theta_{T,0}\|_{H^s}e^{\int\limits_{0}^{t}(\| u_{T_\gamma}(\tau)\|_{H^s}+ \|\nabla \theta_e(\tau)\|_{H^s})d\tau}\notag\\
		&+ C\int\limits_{0}^{t} \|\nabla \theta_e(\tau)\|_{H^s}\|T-id\| \|\theta_e(\tau)\|_{{H}^{s}} \big(e^{\int\limits_{\tau}^{t}(\| u_{T_\gamma}(\tau_1)\|_{H^s}+ \|\nabla \theta_e(\tau_1)\|_{H^s})d\tau_1}\big)d\tau .
	\end{align}
	Since $\theta_{e,0}\in H^{s+1}$ we have 

	\begin{align}\label{controlH_s+1ofu_e}
		\|\theta_e\|_{H^{s+1}}(t)\lesssim  \|  \theta_{0e}\|_{H^{s+1}}\exp \int\limits_{0}^{t}\| u_e(\tau)\|_{H^{s+1}}d\tau.
	\end{align}
	Using \eqref{e11} and \eqref{controlH_s+1ofu_e} we now have
	\begin{align*}
		\|(\theta_e- \theta_{T})(t)\|_{H^s}&\le  C\big(\|\theta_{e,0}- \theta_{T,0}\|_{H^s}+ \|T-id\|\cdot \sup_{[0,\tau]}\|\theta_e(\tau)\|_{H^{s+1}}^2\big)\cdot e^{t\cdot \sup_{[0,t]}\big(\|u_{T_\gamma}(\tau)\|_{H^s}+\|\theta_e(\tau)\|_{H^{s+1}}\big)}.
	\end{align*}
	Letting $C(s,t,T)$ to be the right-hand-sided above complete the proof.
	
\end{proof}
\begin{Lemma}\label{l4}
	For any $s\ge 0$ there exist two sequences of solutions $\tilde{\tilde{u}}_e^{n}(t)=\nabla ^{\perp}\Delta^{-1}\tilde{\theta}_{e}^{n}(t)$ and $\tilde{u}_e^{n}(t)=\nabla ^{\perp}\Delta^{-1}\tilde{\tilde{\theta}}_{e}^{n}(t)$  $(n=1,2,3\dots)$ of the 2D Euler equations \eqref{e3} with lifespan at least $[0,T^{\ast}]$ in $ C([0,T^{\ast}],H^s)$ such that on the one hand the initial velocity satisfy
	\begin{align}
		&\|\tilde{u}_{e,0}^{n}\|_{H^s}=\| \tilde{\tilde{u}}_{e,0}^{n}\|_{H^s}\leq 1
		\quad \text{and }\lim\limits_{n\rightarrow \infty}\|\tilde{u}_{e,0}^{n}- \tilde{\tilde{u}}_{e},0^{n}\|_{H^s}=0
	\end{align}
	
	while on the other hand for $t>0$
	\[\liminf\limits_{n\rightarrow \infty}\|\tilde{u}_e^{n}(t)- \tilde{\tilde{u}}_e^{n}(t)\|_{H^s}\ge |\sin t|>0.\]
	
\end{Lemma}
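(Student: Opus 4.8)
The plan is to construct the two sequences explicitly as high-frequency wave-packet perturbations of a common low-frequency background, following the now-standard approach to non-uniform dependence for transport-type equations (Himonas--Kenig, Himonas--Misio{\l}ek, and especially the 2D Euler construction of Misio{\l}ek--Yoneda / Bourgain--Li). Concretely, I would fix a smooth, compactly supported background vorticity $\theta_{b}$ whose associated Euler velocity $u_{b}=\nabla^{\perp}\Delta^{-1}\theta_{b}$ is a shear-type or rotation-type flow on a neighborhood of the origin, so that the linearized flow map rotates tangent vectors; the angle of rotation after time $t$ is what will ultimately produce the factor $|\sin t|$. On top of $\theta_{b}$ I superimpose a perturbation $\delta\,\varphi(x)\cos(n\,\omega\cdot x)$ (and for the second sequence, the same with a sine, or with a slightly shifted phase), where $\varphi$ is a fixed bump supported near a point where the background flow is nontrivial, $\omega$ is a fixed unit direction, $\delta=\delta(n)\to 0$ is chosen so the two data differ by $o(1)$ in $H^{s}$ while each stays bounded by $1$, and the high frequency $n$ carries a normalizing power so that $\|\text{perturbation}\|_{H^{s}}\sim 1$. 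The two corresponding Euler solutions agree at $t=0$ up to $o(1)$ in $H^{s}$ by construction.

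Next I would track the solutions forward in time. The key point is that to leading order in $1/n$ the high-frequency part of each solution is simply transported by the background flow: writing $\theta_{e}^{n}(t)\approx \theta_{b}(t)+\delta\,\varphi(\phi_{b,t}^{-1}(x))\cos\!\big(n\,\omega\cdot \phi_{b,t}^{-1}(x)\big)$, where $\phi_{b,t}$ is the background flow map, the oscillation direction gets rotated from $\omega$ to $(D\phi_{b,t}^{-1})^{T}\omega$, whose angle with $\omega$ is comparable to $|\sin t|$ by the choice of background. Taking $s$ derivatives, the dominant contribution to $\|\tilde u_{e}^{n}(t)-\tilde{\tilde u}_{e}^{n}(t)\|_{H^{s}}$ comes from differentiating the oscillatory factor, producing $n^{s}\delta$ times a fixed $L^{2}$-type quantity that measures the geometric separation of the two rotated wave packets; this quantity is bounded below by $c|\sin t|$ and bounded above, giving the claimed $\liminf \ge |\sin t|$ after normalizing $n^{s}\delta\sim 1$. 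The error terms — the difference between the true Euler solution and this transported ansatz — must be shown to be $o(1)$ in $H^{s}$ uniformly on $[0,T^{*}]$; this is where one invokes the local well-posedness theory (Theorem \ref{thm:lwp_Hadmard}, or rather its Euler analogue) together with the a priori bounds of Lemma \ref{A1-2}-type arguments and energy estimates to control the commutator between $J^{s}$ and the flow. I would also need $\theta_{e,0}^{n}\in \dot H^{-1}$, which is arranged by taking $\theta_{b}$ and $\varphi$ with zero mean (or by a cutoff argument) so that $\Delta^{-1}$ is well-defined on each piece.

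The main obstacle I expect is making the error estimate genuinely uniform: one has to show that the high-frequency Euler solution really does stay $O(n^{-1})$-close in $H^{s}$ to the transported ansatz over the whole fixed time interval, despite the fact that $\|\theta_{e}^{n}\|_{H^{s+1}}$ may grow like $n$. The resolution is to work one regularity level above — take data in $H^{s+1}$, propagate the $H^{s+1}$ bound on the background via Lemma-\ref{l1-2}-style Gr\"onwall estimates (this is exactly why the theorem hypotheses put the data in $\dot H^{-1}\cap H^{s+1}$), and then use the extra derivative to absorb the loss when estimating the remainder in $H^{s}$. One must be careful that all constants in the Gr\"onwall arguments depend only on the fixed background $\theta_{b}$ and on $T^{*}$, not on $n$; since the background solves Euler independently of $n$ this is automatic, and the perturbation contributes only lower-order-in-$\delta$ feedback. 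Finally, I would record that since $\|\tilde u_{e,0}^{n}\|_{H^{s}}=\|\tilde{\tilde u}_{e,0}^{n}\|_{H^{s}}$ can be normalized to be $\le 1$ simultaneously (choosing the background small and the perturbation amplitude to match), both conclusions of the lemma hold, which completes the proof.
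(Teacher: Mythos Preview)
The paper does not actually prove this lemma: its entire proof is the single sentence ``See \cite{HM}; Thm.~2.1 and see also \cite{BL}; Thm.~1.1.'' Your sketch---a smooth low-frequency background flow carrying two high-frequency oscillatory perturbations whose phases are decorrelated by the background transport, with the $|\sin t|$ separation coming from the rotation of the oscillation direction under the background flow map, and with the error between the true solution and the transported ansatz controlled by working one derivative higher---is a faithful outline of the construction in \cite{HM} (with the $\mathbb{R}^2$ adaptations in the spirit of \cite{BL}). So your approach is correct and matches what the cited references do; there is nothing further to compare beyond noting that the paper delegates the argument entirely to the literature while you have reproduced its skeleton.
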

\begin{proof}
	See \cite{HM}; Thm. 2.1 and see also \cite{BL}; Thm. 1.1.
\end{proof}

\bigskip
We recall \Cref{thm:nonuniformfornonperiodiccase}:
\nonuniformfornonperiodiccase*
\begin{proof}
	Let $\tilde{u}_{e,0}^{n}$ and $ \tilde{\tilde{u}}_{e,0}^{n}$ be the initial data as in Lemma \ref{l4}. Let $\tilde{\theta}_{e}^{n}(t)=\mathrm{curl}\,\tilde{u}_{e}^{n}(t)$ and  $ \tilde{\tilde{\theta}}_{e}^{n}(t)= \mathrm{curl}\, \tilde{\tilde{u}}_{e}^{n}(t)$ be the corresponding vorticity solutions of the 2D Euler equations. 
	Using  \eqref{e1} together with Lemma \ref{Biot-Savart2} and Lemma \ref{Consequence-Biot-Savart2} we have 
	
	\begin{align}\label{perturbative est1}
		\|\tilde{u}_{T}^{n}(t)- \tilde{\tilde{u}}_{T}^{n}(t)\|_{H^s}
		&\gtrsim \|\tilde{u}_{e}^{n}(t)- \tilde{\tilde{u}}_{e}^{n}(t)\|_{{H}^s}-\underbrace{\|\tilde{\theta}_{e}^{n}(t)- \tilde{\theta}_{T}^{n}(t)\|_{{H}^{s}}}_{P_1} -\|T-\id\| \|\tilde{\theta}_{e}^{n}\|_{{H}^{s}}\notag\\
		&-\underbrace{\|\tilde{\tilde{\theta}}_{T}^{n}(t)- \tilde{\tilde{\theta}}_{e}^{n}(t)\|_{{H}^{s}}}_{P_2} -\|T-\id\| \|\tilde{\tilde{\theta}}_{e}^{n}\|_{{H}^{s}}\notag\\
		&\geq \|\tilde{u}_{e}^{n}(t)- \tilde{\tilde{u}}_{e}^{n}(t)\|_{{H}^s}-C(s,t,T) -\|T-\id\| \|\tilde{\theta}_{e}^{n}\|_{{H}^{s}} -\|T-\id\| \|\tilde{\tilde{\theta}}_{e}^{n}\|_{{H}^{s}}
		\notag\\
		&\ge |\sin t|+o(\|T-Id\|)\notag,
	\end{align}
	where $C(s,t,T):=	\max(\tilde{C},\tilde{\tilde{C}})(s,t,T)$, with $P_1\le \tilde{C}(s_T-1,t,T)$ and $P_2\le \tilde{\tilde{C}}(s_T-1,t,T)$ by the Lemma \ref{l1-2}. The last inequality follows by passing to the limit and using $ 	\liminf\limits_{n\rightarrow \infty} \|\tilde{u}_{e}^{n}(t)- \tilde{\tilde{u}}_{e}^{n}(t)\|_{H^s}\ge |\sin t|>0$ in Lemma \ref{l4} and the fact that $C(s,t,T)$ goes to $0$ by Lemma \ref{l1-2}.
\end{proof}

\section{Some technical lemmas}\label{sec:tech_lem}
We begin by recalling the well-known product estimates in the standard Sobolev space setting which we need in the periodic and non-periodic cases.
\begin{Lemma}	Let $s,t$ be real numbers such that $0<t\le s$. Then
	\begin{itemize}
		\item[(i)]\label{Appendix1_Kato1975}
		if $s>1$, then 
		$	\|fg\|_{H^t}\lesssim \|f\|_{H^s}\|g\|_{H^t}$,
		
		\item[(ii)] if $s<1$, then 
		$	\|fg\|_{H^{s+t-1}}\lesssim \|f\|_{H^s}\|g\|_{H^t}$,
		
		\item[(iii)]\label{A2_Kato1975}
		if $s>2$, then
		$	\|[\Lambda^s, M_f]\Lambda^{1-s}\|_{L^2}\lesssim\|\nabla f\|_{H^{s-1}}$.
	\end{itemize}
\end{Lemma}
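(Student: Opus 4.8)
The plan is to derive all three bounds from Littlewood--Paley theory: parts (i) and (ii) are the standard Sobolev product rules, obtained from Bony's paraproduct decomposition, while (iii) is the Kato--Ponce commutator estimate, which is the only genuinely delicate point. For (i) and (ii), with $S_{j-1}f=P_{\le 2^{j-1}}f$ and $\Delta_j f=P_{2^j}f$, I would write
\[
fg=T_fg+T_gf+R(f,g),\qquad T_fg=\sum_j S_{j-1}f\,\Delta_j g,\quad R(f,g)=\sum_{|j-k|\le 1}\Delta_j f\,\Delta_k g,
\]
and estimate the three pieces dyadically via Bernstein's inequalities. For a paraproduct, each block $S_{j-1}f\,\Delta_jg$ is frequency-localized in an annulus $|\xi|\sim 2^j$, so its $\dot H^\sigma$ contribution has size $2^{j\sigma}\|S_{j-1}f\|_{L^\infty}\|\Delta_jg\|_{L^2}$; in case (i) one uses $\|S_{j-1}f\|_{L^\infty}\le\|f\|_{L^\infty}\lesssim\|f\|_{H^s}$ (Sobolev embedding, valid since $s>1=d/2$) and takes $\sigma=t$, while in case (ii) one uses the Bernstein bound $\|S_{j-1}f\|_{L^\infty}\lesssim 2^{j(1-s)}c_j\|f\|_{H^s}$ ($s<1$) and takes $\sigma=s+t-1$; the resulting dyadic sequences are square-summable precisely because $0<t\le s$ and $s+t>0$. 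For the remainder $R(f,g)$, each block $\Delta_jf\,\Delta_kg$ is localized only in a ball $|\xi|\lesssim 2^j$, so one first applies Bernstein's $L^1\to L^2$ inequality, $\|\Delta_m(\Delta_jf\,\Delta_kg)\|_{L^2}\lesssim 2^{m}\|\Delta_jf\|_{L^2}\|\Delta_kg\|_{L^2}$, and sums over $m\le j$ by discrete Young, convergence again using $s+t>0$ (resp. $s>1$). Adding the trivial $L^2$ bound to upgrade $\dot H^\sigma$ to $H^\sigma$ finishes (i) and (ii).

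For (iii), I would first reduce the operator bound to the pointwise commutator estimate
\[
\|[\Lambda^s,M_f]\,h\|_{L^2}\ \lesssim\ \|\nabla f\|_{L^\infty}\,\|h\|_{H^{s-1}}\ +\ \|\nabla f\|_{H^{s-1}}\,\|h\|_{L^\infty},
\]
and then apply it with $h=\Lambda^{1-s}v$: since $\Lambda^{1-s}$ maps $L^2$ isometrically onto $H^{s-1}$ we have $\|h\|_{H^{s-1}}=\|v\|_{L^2}$, and since $s>2$ the embedding $H^{s-1}\hookrightarrow L^\infty$ gives $\|\nabla f\|_{L^\infty}\lesssim\|\nabla f\|_{H^{s-1}}$ and $\|h\|_{L^\infty}\lesssim\|h\|_{H^{s-1}}=\|v\|_{L^2}$, so every term collapses to $\|\nabla f\|_{H^{s-1}}\|v\|_{L^2}$, which is (iii). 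The displayed commutator estimate I would prove by the same paraproduct splitting, $[\Lambda^s,M_f]h=[\Lambda^s,T_f]h+\big(\Lambda^sT_hf-T_{\Lambda^sh}f\big)+\big(\Lambda^sR(f,h)-R(f,\Lambda^sh)\big)$: the first term is a paradifferential operator of order $s-1$ whose symbol is built from $\nabla f$, hence bounded $H^{s-1}\to L^2$ with norm $\lesssim\|\nabla f\|_{L^\infty}$, and the other two groups are not genuine commutators but place $f$ at frequency comparable to or above that of $h$, so they are estimated exactly as the paraproducts in (i)--(ii) and cost only one derivative of $f$.

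The main obstacle is (iii): making precise that $[\Lambda^s,T_f]$ is genuinely of order $s-1$ with operator norm controlled by $\|\nabla f\|_{L^\infty}$ requires the paradifferential symbol calculus, and the non-commutator remainder terms must be organized carefully so that only first-order derivatives of $f$ survive. Since this is stated as a standard technical lemma, I would in practice simply cite Kato--Ponce (or Kato's original paper, and Bahouri--Chemin--Danchin \cite{BCD}, which also contains the product rules (i)--(ii)) for the commutator estimate, reproducing only the short Littlewood--Paley argument for (i)--(ii).
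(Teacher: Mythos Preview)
The paper does not actually prove this lemma: its entire proof is a reference to Kato's 1975 paper, Appendix, Lemmas~A1 and~A2. Your closing remark---that in practice one would simply cite Kato (or Kato--Ponce, or \cite{BCD})---is therefore exactly what the paper does.

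Your detailed sketch goes well beyond the paper and is essentially correct. The paraproduct argument you outline for (i)--(ii) is the standard one (and is precisely how these product rules are proved in \cite{BCD}); your reduction of (iii) to a Kato--Ponce-type commutator bound followed by the Sobolev embeddings valid for $s>2=1+d/2$ is also the right strategy, and is in the spirit of Kato's original argument. The only point to be slightly careful about is that the intermediate estimate you display, with $\|\nabla f\|_{H^{s-1}}$ rather than $\|f\|_{H^s}$ on the right, is a refined version of Kato--Ponce (reflecting that the commutator annihilates constants); this is indeed what Kato's Lemma~A2 gives, but it is a hair sharper than the version stated later in the paper as Lemma~\ref{cmrest}.
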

\begin{proof}
	See e.g., \cite{Kato2975}; Appendix, Lem. $A1$ and $A2$.
\end{proof}
\bigskip

In sections $2$ and $3$ we use the following  commutator estimates

\begin{Lemma}(Kato-Ponce estimates.)\label{cmrest}
	Let $J^s=(id-\Delta)^{s/2}$, $s>0$, $1<p<\infty$. Then for all \/$f$, $g$ in $ \mathcal{S}(\mathbb{R}^d)$ or $C^{\infty}(\mathbb{T}^d)$, we have 
	\begin{equation}\label{KPest}
		\|J^s(fg)-fJ^s(g)\|_{L^p}\lesssim_{s,p,d}\|J^sf\|_{L^p}\|g\|_{L^{\infty}}+\|\nabla f\|_{L^{\infty}}\|J^{s-1}g\|_{L^p}.
	\end{equation}
\end{Lemma}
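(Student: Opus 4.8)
The goal is to bound the commutator $[J^s, M_f]g = J^s(fg) - f J^s g$ in $L^p$. The plan is to carry out a Littlewood–Paley paraproduct decomposition of the product $fg$ and to analyze separately the three standard regions: low–high interactions (where $f$ is at low frequency relative to $g$), high–low interactions (where $f$ is at high frequency), and the diagonal resonant region. The key algebraic point is that when $f$ is spectrally supported at frequencies much lower than $g$, the operator $J^s$ applied to $P_{\lesssim N}f \cdot P_N g$ is, up to harmless error, the same as $f$ applied to $J^s(P_N g)$ — the commutator in this region is governed by the \emph{variation} of the symbol $\langle\xi\rangle^s$ across the small frequency support of $f$, which is the mechanism producing the gradient $\nabla f$.

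First I would write $J^s(fg) - fJ^s g = \sum_N \bigl(J^s(S_{N/8}f\, \Delta_N g) - S_{N/8}f\, J^s(\Delta_N g)\bigr) + (\text{high--low}) + (\text{resonant})$, where $S_{N/8}$ is the low-frequency cutoff and $\Delta_N$ the Littlewood–Paley piece at scale $N$. For the first (main) sum, on each dyadic block the symbol identity $\langle\xi+\eta\rangle^s - \langle\eta\rangle^s = \int_0^1 \nabla(\langle\cdot\rangle^s)(\eta + t\xi)\cdot\xi\, dt$ with $|\xi|\lesssim N$, $|\eta|\sim N$ shows that $J^s(S_{N/8}f\,\Delta_N g) - S_{N/8}f\, J^s\Delta_N g$ can be written as a pseudodifferential operator of order $s-1$ in $g$ paired against a term built from $\nabla f$; after summing in $N$ using a standard Mikhlin-type vector-valued multiplier bound (or the Fefferman–Stein / Coifman–Meyer machinery) one obtains $\lesssim \|\nabla f\|_{L^\infty}\|J^{s-1}g\|_{L^p}$. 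For the high–low and resonant pieces, $g$ sits at frequency $\lesssim$ that of $f$, so one crudely bounds $\|J^s(\Delta_N f\, S_{\lesssim N} g)\|_{L^p} \lesssim \|J^s \Delta_N f\|_{L^p}\|g\|_{L^\infty}$ (and similarly $\|f J^s(\cdots)\|_{L^p}$ is absorbed), and summing the square function in $N$ via Littlewood–Paley theory yields $\lesssim \|J^s f\|_{L^p}\|g\|_{L^\infty}$.

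Rather than reproving this from scratch, since the paper explicitly attributes the estimate to Kato–Ponce, I would present the proof as: (i) reduce via the paraproduct decomposition as above; (ii) invoke the Coifman–Meyer multiplier theorem to handle the off-diagonal pieces with the stated gains; (iii) cite \cite{Kato2975} and, for the sharp endpoint bookkeeping, standard references such as \cite{BCD}. The cleanest writeup treats the case $s$ a positive integer first by Leibniz (where the commutator literally is $\sum_{1\le|\alpha|\le s}\binom{s}{\alpha}\partial^\alpha f\, \partial^{s-\alpha}g$, and Gagliardo–Nirenberg interpolation gives the two-term bound), then extends to real $s>0$ by the paraproduct argument.

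\textbf{Main obstacle.} The delicate point is the endpoint structure of the bound: one must ensure the low-frequency factor of $g$ is estimated in $L^\infty$ with no derivative loss in the high–low region, and simultaneously that in the low–high region only \emph{one} derivative falls on $f$ (in $L^\infty$) while $g$ carries $s-1$ derivatives in $L^p$. Getting both halves without an $\varepsilon$-loss requires the vector-valued (square-function) Littlewood–Paley bounds rather than naive triangle-inequality summation, and care that the Coifman–Meyer symbol estimates hold uniformly in the dyadic parameter $N$. I expect this bookkeeping — not any single inequality — to be the crux, and it is exactly the part that is standard enough to delegate to \cite{Kato2975, BCD}.
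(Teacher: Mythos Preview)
Your paraproduct sketch is mathematically sound and is indeed the standard modern route to the Kato--Ponce commutator estimate: the low--high piece handled via the mean-value expansion of the symbol $\langle\xi+\eta\rangle^s-\langle\eta\rangle^s$ to extract $\nabla f$, and the high--low and diagonal pieces bounded by $\|J^s f\|_{L^p}\|g\|_{L^\infty}$ after square-function summation. However, the paper does not prove the lemma at all; its entire proof is the one-line citation ``See \cite{KP1988}; Appendix, Lem.~$X1$.'' So your proposal is strictly more than what the paper does, and the ``approach'' you should compare against is simply a reference to the original Kato--Ponce paper. One correction: the reference you cite, \cite{Kato2975}, is Kato's 1975 paper on abstract quasi-linear evolution equations and does \emph{not} contain this commutator estimate; the correct citation (and the one the paper uses) is \cite{KP1988}, the 1988 Kato--Ponce paper. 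If you want a textbook treatment of the paraproduct argument you outline, \cite{BCD} or \cite{Dongli2019} are appropriate, but not \cite{Kato2975}.
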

\begin{proof}
	See \cite{KP1988}; Appendix, Lem. $X1$.
\end{proof}
\bigskip
\begin{Lemma}(Gronwall inequality)\label{Lem3.3page 125: Chemin et. al.}
	Let $f$ and $g$ be two $C^0$ (resp., $C^1$) nonnegative functions on $[t_0, T]$. Let $A$ be a continuous function on $[t_0, T]$. Assume that, for $t\in [t_0,T]$,
	\begin{align*}
		\frac{1}{2}\frac{d}{dt}g^2(t)\le A(t)g^2(t)+f(t)g(t).
	\end{align*}
	For any time $t\in [t_0, T]$, we obtain
	\begin{align*}
		g(t)\le g(t_0)\exp\big(\int\limits_{t_0}^{t}A(s)ds\big)+\int\limits_{t_0}^{t}f(s)\exp\big(\int\limits_{s}^{t}A(\tau)d\tau\big)ds.
	\end{align*}
\end{Lemma}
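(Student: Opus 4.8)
The plan is to prove this by a standard regularization argument that removes the possible degeneracy of $g$ at its zeros, thereby reducing the quadratic differential inequality to a linear one, to which the classical integrating-factor form of Gr\"onwall's lemma applies directly. The only genuine issue is that $g$ may vanish and $A$ has no prescribed sign, so one cannot simply divide the hypothesis by $g$; the device $g_\varepsilon:=\sqrt{g^2+\varepsilon^2}$ resolves this.

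First I would fix $\varepsilon>0$ and set $g_\varepsilon:=\sqrt{g^2+\varepsilon^2}$. This function is $C^1$ (resp.\ $C^0$, with $g_\varepsilon^2$ absolutely continuous) whenever $g^2$ is, it satisfies $g_\varepsilon\ge\varepsilon>0$, $g_\varepsilon\ge g\ge0$, and $g_\varepsilon\,\tfrac{d}{dt}g_\varepsilon=\tfrac12\tfrac{d}{dt}g^2$. Combining this with the hypothesis $\tfrac12\tfrac{d}{dt}g^2\le A g^2+fg$, writing $g^2=g_\varepsilon^2-\varepsilon^2$, and using $0\le g\le g_\varepsilon$, I obtain
\begin{align*}
g_\varepsilon\,\frac{d}{dt}g_\varepsilon\le A\,g_\varepsilon^2+|A|\varepsilon^2+f\,g_\varepsilon .
\end{align*}
Dividing by $g_\varepsilon>0$ and bounding $\varepsilon^2/g_\varepsilon\le\varepsilon$ gives the linear inequality
\begin{align*}
\frac{d}{dt}g_\varepsilon\le A(t)\,g_\varepsilon+\bigl(|A(t)|\varepsilon+f(t)\bigr).
\end{align*}

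Next I would integrate this with the integrating factor $R(t):=\exp\bigl(-\int_{t_0}^tA(s)\,ds\bigr)$. Since $\tfrac{d}{dt}\bigl(R\,g_\varepsilon\bigr)=R\bigl(g_\varepsilon'-A\,g_\varepsilon\bigr)\le R\,(|A|\varepsilon+f)$, integrating over $[t_0,t]$ and multiplying back by $R(t)^{-1}$ yields
\begin{align*}
g_\varepsilon(t)\le g_\varepsilon(t_0)\exp\Bigl(\int_{t_0}^tA\Bigr)+\int_{t_0}^t\exp\Bigl(\int_s^tA\Bigr)\bigl(|A(s)|\varepsilon+f(s)\bigr)\,ds .
\end{align*}
Finally I would let $\varepsilon\to0^+$: the left-hand side tends to $g(t)$ and $g_\varepsilon(t_0)\to g(t_0)$ because $g\ge0$, while the contribution containing the factor $\varepsilon$ vanishes since $|A|$ and $\exp\bigl(\int_s^tA\bigr)$ are bounded on the compact interval $[t_0,T]$; what remains is exactly the asserted estimate.

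The main (and essentially only) obstacle is the one already flagged: the possible degeneracy of $g$ forbids dividing the quadratic inequality by $g$, and the absence of a sign condition on $A$ forces one to retain the error term $|A|\varepsilon^2/g_\varepsilon$ rather than discard it — both are handled at once by the choice $g_\varepsilon=\sqrt{g^2+\varepsilon^2}$. In the $C^0$ setting, where the differential inequality is read in the integrated (absolutely continuous) sense, the identical computation applies with $\tfrac{d}{dt}$ interpreted accordingly. Alternatively, one may simply invoke the corresponding lemma of Chemin et al., of which this is a restatement.
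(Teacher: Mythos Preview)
Your proof is correct; the regularization $g_\varepsilon=\sqrt{g^2+\varepsilon^2}$ is the standard device here, and every step goes through as you describe. Note, however, that the paper does not actually supply a proof of this lemma: it is listed among the technical lemmas in Section~\ref{sec:tech_lem} and simply stated as a known Gr\"onwall-type inequality (the label itself points to Lemma~3.3 in Chemin et al., of which this is a verbatim restatement, as you yourself observe at the end). So there is nothing to compare against --- your argument is precisely the classical one from that reference.
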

\bigskip
Let $\phi(\xi)$ be a smooth bump function supported in the ball $|\xi|\le 2$ and $\phi(\xi)=1 $ on the ball $|\xi|\le 1$. Given $f\in \mathcal{S}(\mathbb{R}^d)$ and $M>0$ we have the Littlewood-Paley projection operators 
\begin{align*}
	&\widehat{P_{\le M} f}(\xi)=\phi({\xi}/{M})\hat{f}(\xi),\\
	&\widehat{P_{> M} f}(\xi)=\hat{f}(\xi)-\widehat{P_{\le M}f}(\xi),\\
	&\widehat{P_{ M} f}(\xi)=\widehat{P_{\le M}f}(\xi)-\widehat{P_{\le M/2}f}(\xi)
\end{align*}
and the following estimates
\begin{Lemma}(Bernstein's inequalities.)
	For $1\le p\le q\le \infty$ and $s\in\mathbb{R}$, we have 
	\begin{align*}
		&	\||\nabla|^sP_{N}f\|_{L^p(\mathbb{R}^d)}\simeq N^{s}	\|P_{N}f\|_{L^{p}(\mathbb{R}^d)},\\
		&	\|P_{\le N}f\|_{L^q(\mathbb{R}^d)}\lesssim_{d} N^{d(\frac{1}{p}-\frac{1}{q})}	\|P_{\le N}f\|_{L^{p}(\mathbb{R}^d)},\\
		&	\|P_{ N}f\|_{L^q(\mathbb{R}^d)}\lesssim_{d} N^{d(\frac{1}{p}-\frac{1}{q})}	\|P_{N}f\|_{L^{p}(\mathbb{R}^d)}\\
	\end{align*}
	for any $f$ in $\mathcal{S}(\mathbb{R}^d)$.
\end{Lemma}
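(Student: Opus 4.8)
The plan is to deduce all three estimates from one elementary principle: every operator in sight is convolution against a rescaled Schwartz kernel, so Young's convolution inequality combined with the exact dilation scaling of $L^r$ norms produces the stated powers of $N$.

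First I would record the scaling-plus-Young step. For $m\in\mathcal{S}(\mathbb{R}^d)$ put $m_N(x):=N^d m(Nx)$, so that $\widehat{m_N}(\xi)=\widehat m(\xi/N)$; a change of variables gives $\|m_N\|_{L^r(\mathbb{R}^d)}=N^{d(1-1/r)}\|m\|_{L^r(\mathbb{R}^d)}$ for all $r\in[1,\infty]$, and $\|m\|_{L^r}<\infty$ because $m$ is Schwartz. Hence for $1\le p\le q\le\infty$, letting $r$ be defined by $1+\tfrac1q=\tfrac1r+\tfrac1p$ (equivalently $1-\tfrac1r=\tfrac1p-\tfrac1q$), Young's inequality yields
\[
\|m_N*g\|_{L^q}\le\|m_N\|_{L^r}\|g\|_{L^p}=\|m\|_{L^r}\,N^{d(1/p-1/q)}\|g\|_{L^p}.
\]

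Next I would treat the two $L^p\to L^q$ bounds. Fix $\tilde\phi\in C_c^\infty(\mathbb{R}^d)$ with $\tilde\phi\equiv1$ on $\{|\xi|\le2\}$ and $\tilde\psi\in C_c^\infty(\mathbb{R}^d\setminus\{0\})$ with $\tilde\psi\equiv1$ on $\{1/2\le|\xi|\le2\}$. Since the Fourier multiplier of $P_{\le N}$ is supported in $\{|\xi|\le2N\}$ and that of $P_N$ in $\{N/2\le|\xi|\le2N\}$, one has the exact identities $P_{\le N}f=K_N*P_{\le N}f$ and $P_Nf=\widetilde K_N*P_Nf$, where $K_N(x)=N^d(\mathcal{F}^{-1}\tilde\phi)(Nx)$, $\widetilde K_N(x)=N^d(\mathcal{F}^{-1}\tilde\psi)(Nx)$, and $\mathcal{F}^{-1}\tilde\phi,\mathcal{F}^{-1}\tilde\psi\in\mathcal{S}(\mathbb{R}^d)$. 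Applying the previous step with $g=P_{\le N}f$, respectively $g=P_Nf$, gives the second and third displayed inequalities, and the case $p=q$ (so $r=1$) recovers the $L^p$-boundedness of the projections. For the equivalence $\||\nabla|^sP_Nf\|_{L^p}\simeq N^s\|P_Nf\|_{L^p}$ I would proceed similarly: choosing $\chi\in C_c^\infty(\mathbb{R}^d\setminus\{0\})$ with $\chi\equiv1$ on $\{1/2\le|\xi|\le2\}$ (hence $\chi(\cdot/N)\equiv1$ on the Fourier support of $P_Nf$), write $|\nabla|^sP_Nf=m_N*P_Nf$ with $\widehat{m_N}(\xi)=|\xi|^s\chi(\xi/N)=N^s(|\cdot|^s\chi)(\xi/N)$; since $|\xi|^s\chi\in C_c^\infty$, its inverse transform $m_1$ is Schwartz and $\|m_N\|_{L^1}=N^s\|m_1\|_{L^1}$, so Young's inequality with $r=1$ gives $\||\nabla|^sP_Nf\|_{L^p}\lesssim N^s\|P_Nf\|_{L^p}$. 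The reverse bound follows the same way using the symbol $|\xi|^{-s}\chi(\xi/N)$, whose kernel $n_N$ satisfies $\|n_N\|_{L^1}=N^{-s}\|n_1\|_{L^1}$ with $n_1\in\mathcal{S}$, together with the identity $|\xi|^{-s}\chi(\xi/N)\cdot|\xi|^s=1$ on the Fourier support of $P_Nf$, which gives $P_Nf=n_N*(|\nabla|^sP_Nf)$ and hence $\|P_Nf\|_{L^p}\lesssim N^{-s}\||\nabla|^sP_Nf\|_{L^p}$.

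There is no essential difficulty here; the only point that needs care is organizational, namely choosing each auxiliary cutoff to equal $1$ on the entire Fourier support of the object it multiplies (so the convolution identities are genuinely exact) while keeping it smooth and compactly supported away from the origin where needed (so the rescaled kernels are Schwartz and their norms scale like $N^{d(1-1/r)}$, or like $N^{\pm s}$ in the homogeneous-derivative step). Once the cutoffs are in place, every estimate reduces to a single application of Young's inequality and the dilation identity $\|m(N\cdot)\|_{L^r}=N^{-d/r}\|m\|_{L^r}$.
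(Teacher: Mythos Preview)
Your argument is correct and is in fact the standard proof of Bernstein's inequalities via rescaled Schwartz kernels and Young's inequality. The paper itself does not supply a proof but simply refers the reader to \cite{BCD} and \cite{BL}, where one finds exactly the argument you have written out; so there is nothing to compare beyond noting that you have filled in what the paper leaves as a citation.
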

\begin{proof}
	See e.g. \cite{BCD} or \cite{BL}.
\end{proof}
\bigskip

We next recall a general result of Kato on quasi-linear evolution equations. Let $X$ and $Y$ be reflexive Banach spaces with $Y$ continuously and densely embedded in $X$ and let $W$ be an open ball in $Y$. Let $S$ be an isomorphism from $Y$ onto $X$ and pick a norm on $Y$ so that $S$ becomes an isometry. Denote by $B(Y,X)$ the space of bounded linear operators from $Y$ to $X$.
	\begin{prop}\label{thm6_Kato75}
		Consider the following Cauchy problem in $X$
		\begin{align}\label{quasi-linear}
			&\psi_t + A(t,\psi)\psi = 0, \\
			& \psi(x,0)=\psi_0(x)\nonumber
		\end{align}
		where $\psi_0\in W$ and $0\le t\le T$. Assume that the following conditions hold.
		\begin{itemize}
			\item[($A_1$)] 
			There is $\beta \in \mathbb{R}$ such that 
			$\|e^{-sA(t,y)}\|_{B(X,X)}\le e^{\beta s}$ for any $s\in [0,\infty)$, $0\le t\le T$ and $y\in W$.
			\item[($A_2$)] For each $0\le t\le T$ and $y\in W$ we have
			\begin{align*}
				&SA(t,y)S^{-1}=A(t,y)+\tilde{A}(t,y),\\
				\text{where }	&\tilde{A}(t,y)\in B(X,X)\text{ with } \|\tilde{A}(t,y)\|_{B(X,X)}\le \lambda_1 \text{ for some constant } \lambda_1>0.\nonumber
			\end{align*}
			
			\item[($A_3$)] For each $0\le t\le T$ and $y\in W$ we have $A(t,y) \in B(Y,X)$ (in the sense that $dom(A(t,y)) \supset Y$ and the restriction of $A(t,y)$ to $Y$ is in $B(Y,X)$). For each $y\in W$ the function $t\rightarrow A(t,y)$ is continuous in the operator norm and for each $t\in [0,T]$ the function $y\rightarrow A(t,y)$ is Lipschitz continuous that is
			\begin{equation}
				\|A(t,y)-A(t,z)\|_{B(Y,X)}\le\mu_1\|y-z\|_{X},
			\end{equation}
			for some constant $\mu_1>0$ and $z\in Y$.
			\item[($A_4$)] If $y_0$ is the center of $W$ then $A(t,y)y_0\in Y$ for all  $0\le t\le T$ and $y\in W$ with
			\begin{equation}
				\|A(t,y)y_0\|_{Y}\le \lambda_2,
			\end{equation}
			for some constant $\lambda_2>0$.
			
		\end{itemize}
		Under these assumptions  \eqref{quasi-linear} has a unique solution
		\begin{equation}
			\psi\in C([0,T'];W)\cap C^1([0,T];X)
		\end{equation}
		for some $0<T'\le T$.
	\end{prop}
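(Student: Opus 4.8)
The plan is to prove \Cref{thm6_Kato75} by Kato's classical scheme for quasi-linear evolution equations: \emph{freeze the quasi-linear coefficient}, solve the resulting linear nonautonomous Cauchy problem with the abstract linear evolution theory of Kato \cite{Kato2975}, and then close the argument by a contraction mapping in a suitable class of coefficient paths. Hypotheses $(A_1)$--$(A_3)$ are precisely what is needed to run the linear theory uniformly over the frozen coefficient, while $(A_4)$ supplies the extra regularity that keeps the iterates inside $W$ and upgrades the fixed point to a genuine $C^1$ solution.

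\emph{The linear subproblem.} For a time $T'\in(0,T]$ and constants to be fixed later, introduce the complete metric space (a closed subset of $C([0,T'];X)$ in the sup-$X$ metric)
\[
\mathcal{E}=\Big\{\,y\in C([0,T'];X):\ y(0)=\psi_0,\ \ y(t)\in\overline{W},\ \|y(t)\|_Y\le\rho\ \ \forall t,\ \ \|y(t)-y(s)\|_X\le L\,|t-s|\,\Big\},
\]
which is nonempty (it contains the constant path $\psi_0$). For $y\in\mathcal{E}$ put $A_y(t):=A(t,y(t))$. By $(A_1)$ each $-A_y(t)$ generates a $C_0$-semigroup with $\|e^{-sA_y(t)}\|_{B(X,X)}\le e^{\beta s}$, and submultiplicativity of the operator norm makes $\{A_y(t)\}_{t\in[0,T']}$ a \emph{stable} family on $X$ with constants $(1,\beta)$. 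By $(A_2)$, $SA_y(t)S^{-1}=A_y(t)+\tilde A_y(t)$ with $\|\tilde A_y(t)\|_{B(X,X)}\le\lambda_1$, so $Y$ is admissible for the family and the conjugated family is stable on $X$ with constants $(1,\beta+\lambda_1)$, i.e.\ $\{A_y(t)\}$ is stable on $Y$. By $(A_3)$, $A_y(t)\in B(Y,X)$ and, since $y\in C([0,T'];X)$ while $y\mapsto A(t,y)$ is Lipschitz in the $X$-variable and $t\mapsto A(t,y)$ is norm-continuous, the map $t\mapsto A_y(t)\in B(Y,X)$ is continuous. Hence $\{A_y(t)\}$ is a CD-system in Kato's sense, and his linear evolution theorem produces a unique evolution operator $U_y(t,\sigma)$, $0\le\sigma\le t\le T'$, which is strongly continuous on $X$, leaves $Y$ invariant, is strongly continuous on $Y$, and satisfies
\[
\|U_y(t,\sigma)\|_{B(X,X)}\le e^{\beta(t-\sigma)},\qquad \|U_y(t,\sigma)\|_{B(Y,Y)}\le e^{(\beta+\lambda_1)(t-\sigma)} .
\]
Moreover $\frac{d}{dt}U_y(t,\sigma)\phi=-A_y(t)U_y(t,\sigma)\phi$ in $X$ for $\phi\in Y$. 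Define the solution map $\Phi$ by $\Phi(y)(t):=U_y(t,0)\psi_0$.

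\emph{Self-mapping and contraction.} Since $\psi_0\in W\subset Y$, $\Phi(y)(t)\in Y$ and $\|\Phi(y)(t)\|_Y\le e^{(\beta+\lambda_1)T'}\|\psi_0\|_Y\le\rho$ once $\rho>\|\psi_0\|_Y$ and $T'$ is small; the $Y$-continuity of $U_y$ together with $(A_4)$ — which bounds the only potentially $Y$-unbounded part of $A_y(s)\psi_0$, namely $A_y(s)y_0$, by $\lambda_2$ — let one shrink $T'$, uniformly in $y\in\mathcal{E}$, so that $\Phi(y)(t)$ stays in the open ball $W$. For the $X$-Lipschitz bound, $\Phi(y)(t)-\Phi(y)(\sigma)=-\int_\sigma^t A_y(\tau)\Phi(y)(\tau)\,d\tau$, and writing $A_y(\tau)w=A(\tau,y(\tau))y_0+A(\tau,y(\tau))(w-y_0)$ one controls the first term by $(A_4)$ (and $Y\hookrightarrow X$) and the second by $\|A(\tau,y(\tau))\|_{B(Y,X)}\|w-y_0\|_Y$, where $\|A(\tau,y(\tau))\|_{B(Y,X)}$ is uniformly bounded on $[0,T']\times\mathcal{E}$ by $(A_3)$ (continuity of $\tau\mapsto A(\tau,y_0)$ plus the Lipschitz bound $\mu_1\|y(\tau)-y_0\|_X$); this fixes a uniform $L$ and gives $\Phi:\mathcal{E}\to\mathcal{E}$. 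For the contraction, use the Duhamel identity
\[
U_{y_1}(t,0)\psi_0-U_{y_2}(t,0)\psi_0=\int_0^t U_{y_1}(t,\sigma)\big(A_{y_2}(\sigma)-A_{y_1}(\sigma)\big)U_{y_2}(\sigma,0)\psi_0\,d\sigma ,
\]
estimated in $X$ via the $B(Y,X)$-Lipschitz bound $\|A_{y_2}(\sigma)-A_{y_1}(\sigma)\|_{B(Y,X)}\le\mu_1\|y_1(\sigma)-y_2(\sigma)\|_X$ from $(A_3)$ and the $Y$-bound on $U_{y_2}(\sigma,0)\psi_0$, to obtain
\[
\sup_{0\le t\le T'}\|\Phi(y_1)(t)-\Phi(y_2)(t)\|_X\le \mu_1\,e^{\beta T'}e^{(\beta+\lambda_1)T'}\|\psi_0\|_Y\,T'\ \sup_{0\le t\le T'}\|y_1(t)-y_2(t)\|_X ,
\]
which is a strict contraction for $T'$ small. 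Banach's fixed point theorem yields a unique $\psi\in\mathcal{E}$ with $\Phi(\psi)=\psi$.

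\emph{Regularity, persistence, uniqueness.} The fixed point obeys $\psi(t)=U_\psi(t,0)\psi_0$, so $\psi(t)\in W$ with $\|\psi(t)\|_Y\le e^{(\beta+\lambda_1)t}\|\psi_0\|_Y$, and strong $Y$-continuity of $U_\psi$ gives $\psi\in C([0,T'];W)$. Since $\psi_0\in Y$, the linear theory gives $\frac{d}{dt}\psi(t)=-A_\psi(t)\psi(t)=-A(t,\psi(t))\psi(t)$ in $X$ with right-hand side continuous into $X$, so $\psi\in C^1([0,T'];X)$ and solves \eqref{quasi-linear}. Uniqueness: any solution in $C([0,T'];W)\cap C^1([0,T'];X)$ satisfies, a priori on a possibly shorter interval, the Lipschitz-in-$X$ bound defining $\mathcal{E}$ and is its own frozen coefficient, hence is a fixed point of $\Phi$ there and equals $\psi$; a connectedness/continuation argument extends the coincidence to all of $[0,T']$. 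The main obstacle is \emph{not} the contraction step but the linear subproblem underlying it: constructing $U_y(t,\sigma)$ and verifying that it leaves $Y$ invariant, is strongly $Y$-continuous, and obeys the exponential bounds \emph{with constants $(1,\beta)$ and $(1,\beta+\lambda_1)$ independent of $y\in\mathcal{E}$} — this is exactly Kato's linear evolution theorem applied through $(A_1)$--$(A_3)$, and the only genuinely new work is the uniform bookkeeping of these constants (and of $\lambda_2$ from $(A_4)$) so that a single $T'$ works over the whole class $\mathcal{E}$.
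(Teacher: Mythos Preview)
Your proposal is a faithful and essentially correct sketch of Kato's original argument in \cite{Kato2975}; the paper itself does not give a proof but simply refers the reader to that source (``See \cite{Kato2975}; Thm.~6''). So there is nothing to compare at the level of approach: you have reconstructed what the paper only cites.

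One small point worth tightening: your use of $(A_4)$ to keep $\Phi(y)(t)$ inside the open ball $W$ is slightly imprecise as written. The bound $\|\Phi(y)(t)\|_Y\le e^{(\beta+\lambda_1)T'}\|\psi_0\|_Y$ controls only the $Y$-norm, not the $Y$-distance to the center $y_0$; and the strong $Y$-continuity of $U_y(t,0)$ at $t=0$ is a priori not uniform in $y\in\mathcal{E}$. Kato's actual mechanism is to estimate $\|\Phi(y)(t)-y_0\|_Y$ directly via the integral $\Phi(y)(t)-y_0=(U_y(t,0)-I)y_0+U_y(t,0)(\psi_0-y_0)$, using $(A_4)$ to bound $\|A_y(\tau)y_0\|_Y\le\lambda_2$ uniformly (hence $\|(U_y(t,0)-I)y_0\|_Y\le C\lambda_2 t$ via the $Y$-evolution), together with the uniform $Y$-bound on $U_y$ applied to $\psi_0-y_0$. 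This is what makes the choice of $T'$ uniform over $\mathcal{E}$. You gesture at this but the sentence ``the $Y$-continuity of $U_y$ together with $(A_4)$\ldots let one shrink $T'$'' would benefit from being made explicit.
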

	
	\begin{proof}
		See \cite{Kato2975}; Thm. 6.
	\end{proof}
	\bigskip

\begin{prop}\label{thm7_Kato75}
	Under the hypotheses of Proposition \ref{thm6_Kato75},  assume that the following additional condition holds:	
	\begin{equation}
		\|\tilde{A}(t,y)-\tilde{A}(t,z)\|_{B(X,X)}\le \mu_2\|y-z\|_{Y}  	\tag{$A_5$}
	\end{equation}
	for some constant $\mu_2>0.$
		Consider the following Cauchy problems in $X$
		\begin{align}\label{quasi-linear1}
			&u^n_t + A^n(t,u^n)u^n = 0,\\ 
			&u^n(x,0)=u^n_0(x)	\nonumber
		\end{align}
		and assume that $A^n$ satisfy the above additional conditions  uniformly in $n$ (that is all constants $\beta,\lambda_1, \lambda_2,\mu_1, \mu_2$ are independent of $n$). Furthermore, assume that for each $t, y\in[0,T]\times W$, we have
		\begin{equation}
			A^n(t,y)\rightarrow A(t,y) \text{ strongly in } B(Y,X)
		\end{equation}
		\begin{equation}
			\tilde{A}^n(t,y)\rightarrow \tilde{A}(t,y) \text{ strongly in } B(X,X)
		\end{equation}
		as $n\rightarrow \infty$. If $u_0, u_0^n \in W$ and $u_0^n\rightarrow u_0$ in $Y$ as $n\rightarrow \infty$, then there exist $0<T''\le T$ and unique solutions $u^n \in C(0,T''; W)\cap C^1(0,T'';X)$  of \eqref{quasi-linear1}
		for $n=1,2,\dots$ and a unique solution $u$ of \eqref{quasi-linear} in the same class. Moreover, we have
		\begin{equation}
			u^n(t)\rightarrow u(t) \text{ in } Y,\ \text{ uniformly in } t\in[0,T''].
		\end{equation}
	\end{prop}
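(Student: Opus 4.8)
\medskip

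The plan is to carry out the argument in the spirit of Kato's treatment of quasi-linear evolution equations \cite{Kato2975}, of which Proposition~\ref{thm6_Kato75} is the existence half, using that result as a black box for each individual Cauchy problem. \emph{Step 1 (uniform lifespan).} I would first note that the time $T'$ produced by Proposition~\ref{thm6_Kato75} depends only on the radius of the ball $W$ and on the structural constants $\beta,\lambda_1,\lambda_2,\mu_1$. Since every $u_0^n$ lies in the fixed ball $W$ and, because $u_0^n\to u_0\in W$, eventually in a fixed strictly smaller ball, and since by hypothesis the constants for $A^n$ are uniform in $n$, rerunning the proof of Proposition~\ref{thm6_Kato75} produces a single $T''\in(0,T]$ together with solutions $u^n\in C([0,T''];W)\cap C^1([0,T''];X)$ of \eqref{quasi-linear1} for every $n$, a solution $u$ of \eqref{quasi-linear} in the same class, a uniform bound $R':=\sup_n\sup_{[0,T'']}\|u^n(t)\|_Y<\infty$, and (from the equation, $\|A^n(t,u^n)u^n\|_X$ being uniformly bounded) a uniform Lipschitz bound for $t\mapsto u^n(t)$ with values in $X$.

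\medskip

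\emph{Step 2 (convergence in the weaker norm $X$).} Putting $v^n:=u^n-u$ and subtracting \eqref{quasi-linear1} from \eqref{quasi-linear},
\[
\partial_t v^n=-A^n(t,u^n)\,v^n-\big(A^n(t,u^n)-A^n(t,u)\big)u-\big(A^n(t,u)-A(t,u)\big)u,
\]
I would treat the last two terms as a forcing, use ($A_1$) for the generator $A^n(t,u^n(t))$ in a Duhamel representation, and invoke the Lipschitz estimate ($A_3$), which gives $\|(A^n(t,u^n)-A^n(t,u))u\|_X\le\mu_1\|v^n\|_X\|u\|_Y\le\mu_1R'\|v^n\|_X$; Gronwall's inequality then yields
\[
\|v^n(t)\|_X\le e^{(\beta+\mu_1R')t}\|v^n(0)\|_X+\int_0^t e^{(\beta+\mu_1R')(t-\tau)}\big\|\big(A^n(\tau,u(\tau))-A(\tau,u(\tau))\big)u(\tau)\big\|_X\,d\tau.
\]
Here $v^n(0)=u_0^n-u_0\to0$ in $Y\hookrightarrow X$; for each fixed $\tau$ one has $u(\tau)\in W$, so the hypothesis $A^n(\tau,u(\tau))\to A(\tau,u(\tau))$ strongly in $B(Y,X)$ forces the integrand to $0$ pointwise, while it is dominated uniformly in $n$ and $\tau$ by the structural constants. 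Dominated convergence then gives $\sup_{[0,T'']}\|u^n(t)-u(t)\|_X\to0$.

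\medskip

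\emph{Step 3 (upgrade to convergence in $Y$).} This is the step I expect to be the main obstacle: each $A(t,y)$ maps only $Y\to X$, so no energy estimate closes directly in $Y$, and the $Y$-norms of the $u^n$ must be controlled indirectly. Following Kato, I would first note that by Step~1 the family $\{u^n(t)\}_n$ is bounded in the reflexive space $Y$, so along any subsequence a further subsequence satisfies $u^n(t)\rightharpoonup u(t)$ weakly in $Y$ --- the weak limit being forced by Step~2 and $Y\hookrightarrow X$ --- whence $\|u(t)\|_Y\le\liminf_n\|u^n(t)\|_Y$. For the reverse inequality I would pass to $w^n:=Su^n$, which by ($A_2$) solves $\partial_t w^n=-\big(A^n(t,u^n)+\tilde A^n(t,u^n)\big)w^n$ in $X$ with $\|w^n(t)\|_X=\|u^n(t)\|_Y$ and whose generator is a bounded perturbation, of norm at most $\lambda_1$ uniformly in $n$, of $A^n(t,u^n(t))$; hypothesis ($A_5$), which controls $\|\tilde A^n(t,u^n)-\tilde A^n(t,u)\|_{B(X,X)}$ by $\mu_2\|u^n-u\|_Y=\mu_2\|w^n-w\|_X$, is precisely what lets the corresponding comparison estimate for the $w^n$ close by Gronwall once one is careful with the varying domains of the operators involved, and it yields --- together with Step~2 --- that $t\mapsto\|u^n(t)\|_Y$ is uniformly bounded, equicontinuous, and converges pointwise, hence uniformly, to $t\mapsto\|u(t)\|_Y$. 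Weak convergence together with convergence of the norms then upgrades to strong convergence $u^n(t)\to u(t)$ in $Y$ for each $t$ --- automatic in the Hilbert setting $Y=H^t$ relevant here --- and uniformity in $t\in[0,T'']$ follows from the equicontinuity of Step~1 by Arzel\`{a}--Ascoli and a diagonal argument. As for Proposition~\ref{thm6_Kato75}, one may alternatively simply invoke \cite{Kato2975}.
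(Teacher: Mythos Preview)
The paper does not give an independent proof of this proposition: its entire argument is the one-line reference ``See \cite{Kato2975}; Thm.~7.'' Your proposal is a faithful outline of precisely that theorem's proof in Kato's paper --- uniform lifespan from the uniform constants, $X$-convergence via Duhamel/Gronwall using $(A_1)$ and $(A_3)$, and the upgrade to $Y$ by conjugating with $S$, using $(A_2)$ and the new hypothesis $(A_5)$, together with weak compactness and norm convergence --- and you even note at the end that one may simply invoke \cite{Kato2975}. So your approach and the paper's coincide; you have merely unpacked the citation.
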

	\begin{proof}
		See \cite{Kato2975}; Thm. 7.
	\end{proof}
	\bigskip

	\begin{Lemma}\label{kernel_for_velocity for T_gamma,beta}
		Let $T_{\gamma}(\Lambda)=\left(\log(e-\Delta)\right)^{-\gamma}$ where $\gamma\ge 0$. Let $H_{\gamma}$ be the kernel of the opertor $\nabla^{\perp}\Delta^{-1}T_{\gamma}$.
		Then, for all $|\alpha|\ge 0$, we have
		\begin{equation}\label{est-Kernel-gamma-beta}
			|\partial^\alpha H_{\gamma}(x)|\lesssim_{\alpha} \dfrac{1}{|x|^{|\alpha|+1}}\qquad \forall x\ne 0.
		\end{equation}
		Furthermore, for any $f\in C_{c}^{\infty}(\mathbb{R}^2)$ and $1\le p\le \infty$, we have
		\begin{align}
			\|T_{\gamma}f\|_{p}\le \|f\|_{p}.\label{kernel_for_velocity for T_gamma,beta.2}
		\end{align}
	\end{Lemma}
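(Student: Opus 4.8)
The plan is to establish the two assertions by different mechanisms: the pointwise bound \eqref{est-Kernel-gamma-beta} is a Mikhlin--H\"ormander estimate for the Fourier multiplier of $\nabla^{\perp}\Delta^{-1}T_{\gamma}$, while the contraction $\|T_{\gamma}f\|_{p}\le\|f\|_{p}$ follows by realizing $T_{\gamma}$ as convolution against a probability density obtained by subordination. For \eqref{est-Kernel-gamma-beta} I would first note that the multiplier of $\nabla^{\perp}\Delta^{-1}T_{\gamma}$ is $m(\xi)=m_{0}(\xi)\,\ell(\xi)$, where $m_{0}(\xi)=i(\xi_{2},-\xi_{1})|\xi|^{-2}$ is the classical Biot--Savart symbol (homogeneous of degree $-1$, so $|\partial_{\xi}^{\beta}m_{0}(\xi)|\lesssim_{\beta}|\xi|^{-1-|\beta|}$ for $\xi\ne0$) and $\ell(\xi)=(\log(e+|\xi|^{2}))^{-\gamma}$. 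The crucial symbolic input is that $\ell$ is a bounded symbol whose derivatives decay: writing $\ell=F\circ L$ with $F(s)=s^{-\gamma}$ and $L(\xi)=\log(e+|\xi|^{2})$, one checks by induction that $|\partial_{\xi}^{\beta}L(\xi)|\lesssim_{\beta}(1+|\xi|)^{-|\beta|}$ for $|\beta|\ge1$ (because $\partial_{\xi_{k}}L=2\xi_{k}/(e+|\xi|^{2})$ is a smooth symbol of order $-1$), while $L\ge1$ everywhere; then Fa\`a di Bruno combined with $|F^{(m)}(s)|\le\gamma(\gamma+1)\cdots(\gamma+m-1)\,s^{-\gamma-m}$ and $s\ge1$ gives $|\partial_{\xi}^{\beta}\ell(\xi)|\lesssim_{\beta,\gamma}(1+|\xi|)^{-|\beta|}$. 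A Leibniz expansion of $\partial_{\xi}^{\beta}(m_{0}\ell)$ together with the elementary inequality $|\xi|^{-1-|\beta'|}(1+|\xi|)^{-(|\beta|-|\beta'|)}\lesssim|\xi|^{-1-|\beta|}$ then yields $|\partial_{\xi}^{\beta}m(\xi)|\lesssim_{\beta,\gamma}|\xi|^{-1-|\beta|}$ for all $\xi\ne0$; in particular the constant in \eqref{est-Kernel-gamma-beta} will depend on $\gamma$ as well, consistently with how the estimate is used later in the paper.

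Granting this, the kernel estimate follows from a standard dyadic argument. Fix a Littlewood--Paley partition $1=\sum_{j\in\mathbb{Z}}\psi(2^{-j}\xi)$ of $\mathbb{R}^{2}\setminus\{0\}$, set $m_{j}=m\,\psi(2^{-j}\cdot)$ and $H_{\gamma,j}=\mathcal{F}^{-1}m_{j}\in\mathcal{S}(\mathbb{R}^{2})$, so that $\partial^{\alpha}H_{\gamma}=\sum_{j}\partial^{\alpha}H_{\gamma,j}$ in $\mathcal{S}'(\mathbb{R}^{2})$. Each dyadic piece admits the trivial bound
\[
|\partial^{\alpha}H_{\gamma,j}(x)|\le\int_{|\xi|\sim2^{j}}|\xi|^{|\alpha|}|m_{j}(\xi)|\,d\xi\lesssim2^{j(|\alpha|+1)},
\]
and, after $N$ integrations by parts using $\langle x,\nabla_{\xi}\rangle e^{ix\cdot\xi}=i|x|^{2}e^{ix\cdot\xi}$ and the bound $|\nabla_{\xi}^{N}[(i\xi)^{\alpha}m_{j}(\xi)]|\lesssim2^{j(|\alpha|-1-N)}$ on $|\xi|\sim2^{j}$ (here the symbol estimates enter),
\[
|\partial^{\alpha}H_{\gamma,j}(x)|\lesssim|x|^{-N}\int_{|\xi|\sim2^{j}}\big|\nabla_{\xi}^{N}[(i\xi)^{\alpha}m_{j}(\xi)]\big|\,d\xi\lesssim|x|^{-N}2^{j(|\alpha|+1-N)}.
\]
Summing the first bound over $2^{j}\le|x|^{-1}$ and the second (with any fixed $N>|\alpha|+1$) over $2^{j}>|x|^{-1}$, each geometric series is controlled by its endpoint term $2^{j}\sim|x|^{-1}$, which gives $|\partial^{\alpha}H_{\gamma}(x)|\lesssim|x|^{-|\alpha|-1}$; the same splitting shows $\sum_{j}\partial^{\alpha}H_{\gamma,j}$ converges absolutely and locally uniformly on $\mathbb{R}^{2}\setminus\{0\}$, so $H_{\gamma}$ is smooth off the origin and the pointwise bound is meaningful.

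For the operator inequality $\|T_{\gamma}f\|_{p}\le\|f\|_{p}$: the case $\gamma=0$ is immediate since $T_{0}=\id$, so take $\gamma>0$. Using the subordination identity $\lambda^{-\gamma}=\Gamma(\gamma)^{-1}\int_{0}^{\infty}t^{\gamma-1}e^{-\lambda t}\,dt$ with $\lambda=\log(e+|\xi|^{2})\ge1$, I would write the symbol of $T_{\gamma}$ as $(\log(e+|\xi|^{2}))^{-\gamma}=\Gamma(\gamma)^{-1}\int_{0}^{\infty}t^{\gamma-1}(e+|\xi|^{2})^{-t}\,dt$. For each $t>0$ the factor $(e+|\xi|^{2})^{-t}=e^{-t}(1+|\xi|^{2}/e)^{-t}$ is the Fourier transform of a nonnegative function $g_{t}\in L^{1}(\mathbb{R}^{2})$, namely $e^{-t}$ times a rescaled Bessel potential of order $2t$, and evaluating the transform at $\xi=0$ yields $\|g_{t}\|_{L^{1}}=\int g_{t}=e^{-t}$. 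Since $\int_{0}^{\infty}t^{\gamma-1}\|g_{t}\|_{L^{1}}\,dt=\Gamma(\gamma)<\infty$, Fubini permits interchanging the $t$-integral with the Fourier transform, so $T_{\gamma}f=K_{\gamma}*f$ for $f\in C_{c}^{\infty}(\mathbb{R}^{2})$, where $K_{\gamma}=\Gamma(\gamma)^{-1}\int_{0}^{\infty}t^{\gamma-1}g_{t}\,dt\ge0$ satisfies $\|K_{\gamma}\|_{L^{1}}=\Gamma(\gamma)^{-1}\int_{0}^{\infty}t^{\gamma-1}e^{-t}\,dt=1$. Young's inequality then gives $\|T_{\gamma}f\|_{L^{p}}\le\|K_{\gamma}\|_{L^{1}}\|f\|_{L^{p}}=\|f\|_{L^{p}}$ for every $1\le p\le\infty$.

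The points needing genuine care — none a real obstacle — are: (i) checking the Mikhlin bound $|\partial_{\xi}^{\beta}m(\xi)|\lesssim|\xi|^{-1-|\beta|}$ uniformly as $\xi\to0$ and as $\xi\to\infty$, and organizing the $N$-dependent optimization in the dyadic sum; and (ii) justifying the interchange of the $t$-integral with the Fourier transform in the subordination step (Fubini, using $t^{\gamma-1}$-integrability near $t=0$ and the exponential decay in $t$ coming from $\lambda\ge1$) and recording the correct positivity and normalization of the Bessel kernels.
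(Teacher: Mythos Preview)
Your argument is correct. The paper does not actually supply a proof of this lemma: its entire proof environment reads ``See e.g.\ \cite{HK}.'' So there is no in-paper argument to compare against, and your self-contained treatment is strictly more informative than what appears in the text.

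For the record, both halves of your strategy are sound. The symbol bound $|\partial_{\xi}^{\beta}m(\xi)|\lesssim_{\beta,\gamma}|\xi|^{-1-|\beta|}$ follows exactly as you indicate (the only minor check is that for $|\xi|<1$ the Leibniz term $|\xi|^{-1-|\beta'|}(1+|\xi|)^{-(|\beta|-|\beta'|)}$ is indeed $\lesssim|\xi|^{-1-|\beta|}$ because the exponent $-1-|\beta'|$ is less singular than $-1-|\beta|$ when $|\beta'|\le|\beta|$), and the dyadic summation with the split at $2^{j}\sim|x|^{-1}$ is the standard route to pointwise kernel bounds from Mikhlin-type symbol estimates. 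For the $L^{p}$ contraction, the subordination representation $K_{\gamma}=\Gamma(\gamma)^{-1}\int_{0}^{\infty}t^{\gamma-1}g_{t}\,dt$ with $g_{t}\ge0$, $\|g_{t}\|_{L^{1}}=e^{-t}$ is correct (positivity of the Bessel kernel $G_{2t}$ for all $t>0$ is classical), and the Fubini justification you flag is handled by the finiteness of $\int_{0}^{\infty}t^{\gamma-1}e^{-t}\,dt$. Your remark that the implied constant in \eqref{est-Kernel-gamma-beta} depends on $\gamma$ is also accurate and matches how the estimate is invoked in Lemma~\ref{A1-2}.
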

      	\begin{proof}
		See e.g. \cite{HK}.
	\end{proof}

\end{document}